\documentclass[a4paper]{article}
\usepackage{amsfonts}
\usepackage{color}
\usepackage{epsfig}
\usepackage{amsmath,amssymb,amsthm}
\usepackage[T1]{fontenc}
\usepackage[utf8]{inputenc}
\usepackage{xcolor}
\usepackage{geometry}
\setlength\parindent{0pt}
\newtheorem{theo}{Theorem}[section]
\newtheorem{lem}[theo]{Lemma}
\newtheorem{cor}[theo]{Corollary}

\newcommand{\omeps}{\omega_{\eps}}
\newcommand{\ue}{u_{\eps}}
\newcommand{\ve}{v_{\eps}}

\newcommand{\ye}{y_{\eps}}

\newcommand{\Lom}[1]{L^{#1}(\Omega)}
\newcommand{\norm}[2][]{\|#2\|_{#1}}

\newcommand{\Om}{\Omega}
\newcommand{\Ombar}{\overline{\Omega}}
\newcommand{\na}{\nabla}
\newcommand{\delny}{\partial_{\nu}}
\newcommand{\bdry}{\big\rvert_{\partial\Om}}
\newcommand{\muhat}{\widehat{\mu}}

\numberwithin{equation}{section}

\newcommand{\be}{\begin{equation} \label}
\newcommand{\ee}{\end{equation}}
\newcommand{\bea}{\begin{eqnarray}\label}
\newcommand{\eea}{\end{eqnarray}}
\newcommand{\bas}{\begin{eqnarray*}}
\newcommand{\eas}{\end{eqnarray*}}
\newcommand{\bit}{\begin{itemize}}
\newcommand{\eit}{\end{itemize}}
\newcommand{\nn}{\nonumber}
\newcommand{\lp}[2]{\Vert{#2}\Vert_{L^{#1}(\Omega)}}
\newcommand{\R}{\mathbb{R}}
\newcommand{\N}{\mathbb{N}}
\newcommand{\pO}{\partial\Omega}

\newcommand{\eps}{\varepsilon}

\newcommand{\io}{\int_\Omega}

\newcommand{\abs}{\\[5pt]}

\newcommand{\tmax}{T_{\rm max,\eps}}

\newcommand{\uinit}{u_{init}}
\newcommand{\vinit}{v_{init}}
\hfuzz12pt
\begin{document}
\title{How far does small chemotactic interaction 
perturb \\ the Fisher--KPP dynamics?}
\author{
Johannes Lankeit\footnote{jlankeit@math.uni-paderborn.de}\\
{\small Institut f\"ur Mathematik, Universit\"at Paderborn,}\\
{\small Warburger Str. 100, 33098 Paderborn, Germany}
\and
Masaaki Mizukami\footnote{masaaki.mizukami.math@gmail.com}\\
{\small Department of Mathematics, Tokyo University of Science,}\\
{\small 1-3, Kagurazaka, Shinjuku-ku, Tokyo 162-8601, Japan}
}
\date{}
\maketitle
\begin{abstract}
\noindent 
  This paper deals with 
  nonnegative solutions 
  of the Neumann initial-boundary value problem 
  for the fully parabolic chemotaxis-growth system,
  \bas
	\left\{ \begin{array}{lr}
	(u_{\eps})_t=\Delta u_{\eps} - \eps \nabla \cdot ( u_\eps \nabla v_\eps) + \mu u_\eps(1 - u_\eps),
	\qquad& x\in \Omega, \ t>0, \\[1mm]
	(v_{\eps})_t=\Delta v_\eps -v_\eps+u_\eps,
	\qquad& x\in \Omega, \ t>0, 
	\end{array} \right.
  \eas
  with positive small parameter $\eps>0$ 
  in a bounded convex domain 
  $\Omega\subset\R^n$ ($n\geq 1$)
  with smooth boundary. 
  The solutions converge to the solution $u$ to the 
  Fisher--KPP equation as $\eps\to 0$. 
  It is shown that 
  for all $\mu>0$ and any suitably regular 
  nonnegative initial data
  $(\uinit,\vinit)$ 
  there are some constants 
  $\eps_0>0$  and 
  $C>0$ such that
  \bas
    \sup_{t>0}
    \lp{\infty}{u_\eps(\cdot,t)-u(\cdot,t)}
    \leq C\eps
  \quad for\ all\ \eps\in(0,\eps_0).
  \eas
  \abs
%
%
{\bf Key words}: chemotaxis; Fisher-KPP equation; stability. \\
{\bf MSC (2010)}: Primary: 35K51; 
Secondary: 35Q92; 35A09; 35B35; 92C17
\end{abstract}
%
%
%
%
%
%
%
%
\newpage
\section{Introduction}\label{intro}
The Fisher-KPP equation (\cite{fisher,KPP}) 
\begin{equation}\label{fkpp}
 u_t=\Delta u + \mu u(1-u) 
\end{equation}
modelling the spread and growth of a biological population - or, in the original setting, of the prevalence of an advantageous gene within the population (\cite{fisher}) - is well-studied and clearly of interest on its own, and there is a large corpus of literature bearing witness to this, ranging from articles on existence and speed (\cite{fisher,KPP}) or stability (\cite{huang}) of travelling waves, long-term behaviour of solutions and a 'hair-trigger effect' (i.e. instability of the rest state $u\equiv0$) (\cite{aronson_weinberger}) 
to 
treatments of system variants 
in a heterogeneous environment (\cite{berestycki_hamel_nadirashvili}), 
with nonlinear (\cite{lau}) or fractional (\cite{stan_vazquez}) diffusion or nonlocal interaction (\cite{hamel_ryzhik}),  
spatio-temporal delays (\cite{ai}), or investigations of the spreading as free boundary problem (\cite{du_guo}), to name but a few.

At the same time 
owing to the rather simple structure of the equation, 
it is no wonder that \eqref{fkpp} 
makes an appearance as constituent 
of more complex models. 

In the present article, we shall view chemotaxis systems with logistic growth terms as perturbation of \eqref{fkpp} and ask to what extent the behaviour of solutions to \eqref{fkpp} is altered in the presence of weak chemotactic effects.\\[5pt]

\textbf{Chemotaxis models with growth terms.} 
The Keller-Segel model (\cite{KS}, see also the surveys \cite{horstmannI,BBTW}) has arisen from the ambition to understand chemotaxis, i.e. the partially directed movement of cells (bacteria, slime mould, etc.; with density denoted by $u$) in the direction indicated by concentration gradient of a signal substance (concentration $v$) they themselves produce:

\begin{equation}\label{KS}
	\left\{ \begin{array}{ll}
	u_t=\Delta u - \chi
         \nabla \cdot (u \nabla v) 
        + f(u),
\\[1mm]
	\tau v_t=\Delta v -v+u,
	\end{array} \right.
\end{equation}
Herein, the constant $\chi$ stands for the chemotactic sensitivity and 
$f$ is used to denote growth terms, one of the most natural forms (apart from $f\equiv 0$) being $f(u)=u-u^2$. 

The model plays an important role in the mathematical study of emergence of pattern and structure in many different biological contexts (see \cite{hillenpainter_survey}), e.g. slime mould formation, bacterial patterning, embryonic development, progression of cancer, and has spawned an abundance of mathematical literature over the past decades (see \cite{horstmannI,BBTW} and the references therein). 

In particular in the presence of logistic source terms like $f(u)=\kappa u - \mu u^2$ 
(cf. also \cite[Sec. 2.8]{hillenpainter_survey}) structure formation can be observed in \eqref{KS}, as witnessed by 
the 
numerical experiments in \cite{painterhillen_physD} or \cite{ei_izuhara_mimura}, 
attractor results in \cite{OTYM} and transient growth phenomena demonstrated in \cite{win_exceed,lankeit_thresholds,win_emergence}. 
Let us recall some known results about this system: 

In the simplified parabolic-elliptic case (i.e. $\tau=0$), with $f$ generalizing $f(u)=\kappa u-\mu u^2$,  
Tello and Winkler proved the existence of global weak solutions and global classical solutions if $\mu>\frac{n-2}n$. They also showed convergence of solutions to the constant steady state under a stricter condition on the source terms. Very weak solutions have been constructed for sources of the form $f(u)=\kappa u- \mu u^\alpha$ for $\alpha>2-\frac1n$ in \cite{win_veryweak}.


As to the fully parabolic case of \eqref{KS} ($\tau=1$, again with $f(u)=\kappa u-\mu u^2$) it is known that globally bounded classical solutions exist in two-dimensional domains (\cite{OTYM}) or if $\mu$ is sufficiently large (\cite{W2010}); these solutions converge, provided a further largeness requirement is satisfied by $\mu$ (\cite{W2014}).  
For any positive $\mu$, global weak solutions exist (\cite{lankeit_evsmooth}), which moreover in three-dimensional settings are known to become classical after some waiting time and enter an absorbing ball in $C^{2+\alpha}$ if $\kappa$ is sufficiently small (\cite{lankeit_evsmooth}). 

Extensive studies regarding the interplay of exponents $\alpha$ and $\beta$ with respect to global existence of bounded solutions to  
the system obtained from \eqref{KS} upon replacing the production term $+u$ in the second equation by, roughly speaking, 
$+u^{\beta}$ and with $f(u)=u-u^\alpha$ have been conducted by Nakaguchi and Osaki (\cite{nakaguchi_osaki_2d,nakaguchi_osaki_2d3d}). The existence of very weak solutions to \eqref{KS} 
with $f(u)=u-u^\alpha$, $\alpha>2-\frac1n$, has been established by Viglialoro (\cite{Viglialoro}) for bounded domains of arbitrary dimension. 
The convergence rate of solutions for both the parabolic-elliptic and the parabolic-parabolic variant of \eqref{KS} has recently been studied by He and Zheng (\cite{he_zheng}). \\[5pt]


\textbf{Limiting cases as parameters tend to zero.} Letting different parameters in \eqref{KS} tend to zero can help uncover dynamical properties in \eqref{KS} and the relation to affiliated models. 
In \cite{win_exceed,lankeit_thresholds}, considering $\eps\to 0$ in 

\begin{equation} \label{system:thresholds}
 \left\{\begin{array}{l}
 u_t=\eps\Delta u -\nabla \cdot(u\nabla v) +\kappa u -\mu u^2\\
 0=\Delta v-v+u\nn
 \end{array}\right. 
\end{equation}
was used to obtain insight into some transient growth phenomenon of solutions from a blow-up result in the 
hyperbolic-elliptic limit system \eqref{system:thresholds} with $\eps=0$, both in the one-dimensional (\cite{win_exceed}) and in the higher-dimensional radially symmetric case (\cite{lankeit_thresholds}). 

For quite general choices of $f$, system \eqref{KS} had been suggested and investigated by Mimura and Tsujikawa (\cite{mimura_tsujikawa}). 
Inter alia, they considered the limit $\eps\to 0$ of the time-rescaled system with Allee effect
\begin{equation*}
 \left\{\begin{array}{l}
 u_t=\eps^2 \Delta u - \eps \na \cdot(u\na v) + u(1-u)(u-a),\\
 v_t=\Delta v + u -v,
 \end{array}\right. 
\end{equation*}
$a\in (0,\frac12)$, thus showing the existence of localized aggregating patterns.\\[5pt]


In the present paper, we want to investigate the disturbances to Fisher-KPP dynamics caused by weak chemotactic effects 
 and hence consider the system 

\be{cp}
	\left\{ \begin{array}{lr}
	(u_{\eps})_t=\Delta u_{\eps} - \eps \nabla \cdot ( u_\eps \nabla v_\eps) + \mu u_\eps(1 - u_\eps),
	\qquad& x\in \Omega, \ t>0, \\[1mm]
	(v_{\eps})_t=\Delta v_\eps -v_\eps+u_\eps,
	\qquad& x\in \Omega, \ t>0, \\[1mm]
	\frac{\partial u_\eps}{\partial\nu}=
        \frac{\partial v_\eps}{\partial\nu}=0,
	\qquad& x\in \pO, \ t>0, \\[1mm]
	u_\eps(x,0)=\uinit(x),\;
        v_\eps(x,0)=\vinit(x),    
	\qquad& x\in\Omega, 
	\end{array} \right.
\ee
in a bounded convex domain $\Omega\subset\R^n$ 
($n\in\N$) 
with smooth boundary, 
where  $\mu>0$ and 
\be{init}
\uinit\in C^0(\Ombar) 
\ \mbox{and} \
\vinit\in W^{1,\infty}(\Omega)
\mbox{ are nonnegative} 
\ee 
and where $\eps>0$ is to be small. 
We will compare its solutions to those of 

\be{cp2}
	\left\{ \begin{array}{lr}
	u_{t}=\Delta u + \mu u(1 - u),
	\qquad& x\in \Omega, \ t>0, \\[1mm]
	\frac{\partial u}{\partial\nu}=0,
	\qquad& x\in \pO, \ t>0, \\[1mm]
	u(x,0)=\uinit(x).
	\qquad& x\in\Omega 
	\end{array} \right.
\ee


\textbf{Main result.} Whereas large chemotaxis terms can cause significantly altered solution behaviour (cf. e.g. \cite[Thm. 4.3]{tello_winkler_07}), 
intuition leads to surmise that in presence of weak chemotactic effects, solutions to \eqref{cp} should be close to solutions of \eqref{cp2}. For example, as $\eps\to 0$, one might expect convergence in some $L^p(\Om)$ on each finite time interval. 
We will prove that the solutions converge \textit{uniformly} in $\Om\times(0,\infty)$ and moreover show that this convergence is \textit{linear} in the chemotactic strength $\eps$: 

\begin{theo}\label{maintheorem}
Let $n\in\N$ and 
let $\Omega\subset\R^n$ be a bounded convex domain 
with smooth boundary. 
Let $\mu>0$ and suppose that 
$(\uinit,\vinit)$ satisfies 
(\ref{init}). 
Then there are $\eps_0>0$ and $C>0$ such that 
for all $\eps\in [0,\eps_0)$, (\ref{cp}) has a 
global classical solution 
and that for all $\eps\in(0,\eps_0)$ 
the solution $u_\eps$ to (\ref{cp}) and 
the solution $u$ to (\ref{cp2}) satisfy 
\be{mainineq}
  \sup_{t>0}\|u_\eps(\cdot,t)-u(\cdot,t)\|_{L^{\infty}(\Omega)}
  \leq C\eps. 
\ee 
\end{theo}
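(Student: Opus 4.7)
The analysis will centre on $w_\eps := u_\eps - u$, which has zero initial datum, satisfies homogeneous Neumann conditions, and obeys
$$
(w_\eps)_t = \Delta w_\eps - \eps \nabla\cdot(u_\eps \nabla v_\eps) + \mu w_\eps(1 - u_\eps - u) \qquad\text{in } \Om\times(0,\infty).
$$
The coefficient $\mu(1-u_\eps-u)$ is decisive: for small times, when $u_\eps,u$ are still small, it is positive and drives exponential amplification in any naive estimate, whereas once Fisher--KPP dynamics have pushed $u$ and, for small $\eps$, also $u_\eps$ close to $1$, it becomes strictly negative and produces dissipation strong enough to support a bound uniform in time.

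My plan has four stages. First, I would invoke standard chemotaxis-with-logistic-source arguments in the small-chemotaxis regime to secure $\eps_0>0$ such that for every $\eps\in[0,\eps_0)$, (\ref{cp}) admits a global classical solution satisfying
$$
\sup_{t>0}\|u_\eps(\cdot,t)\|_{L^\infty(\Om)} \leq M,\qquad \sup_{t>0}\|\nabla v_\eps(\cdot,t)\|_{L^\infty(\Om)} \leq K,
$$
with $M,K$ independent of $\eps$; convexity of $\Om$ enters here through boundary sign conditions used for the $v_\eps$-estimates. Second, assuming $u_{init}\not\equiv 0$ (otherwise $u\equiv u_\eps\equiv 0$ and (\ref{mainineq}) is trivial), the classical stabilisation $u(\cdot,t)\to 1$ for (\ref{cp2}) together with the analogous stabilisation $u_\eps(\cdot,t)\to 1$ uniformly in $\eps$, known for the chemotaxis-growth system with small coupling, produce a time $T_0>0$, independent of $\eps\in[0,\eps_0)$, beyond which $u+u_\eps\geq \tfrac{3}{2}$ holds pointwise on $\Om$.

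Third, on the finite interval $[0,T_0]$ I would obtain an $L^\infty$-bound of order $\eps$ via Duhamel's formula: from
$$
w_\eps(t) = -\eps\int_0^t e^{(t-s)\Delta_N}\nabla\cdot(u_\eps\nabla v_\eps)(s)\,ds + \int_0^t e^{(t-s)\Delta_N}\bigl[\mu w_\eps(1-u_\eps-u)\bigr](s)\,ds,
$$
the bounds from stage~1 together with the smoothing estimate $\|e^{\tau\Delta_N}\nabla\cdot F\|_{L^\infty(\Om)}\leq C\tau^{-1/2-n/(2q)}\|F\|_{L^q(\Om)}$ for some $q>n$ yield, for $t\in[0,T_0]$,
$$
\|w_\eps(t)\|_{L^\infty(\Om)} \leq C_1\eps + C_2\int_0^t \|w_\eps(s)\|_{L^\infty(\Om)}\,ds;
$$
Gronwall together with $w_\eps(\cdot,0)=0$ then gives $\|w_\eps(\cdot,t)\|_{L^\infty(\Om)}\leq C_3\eps$ on $[0,T_0]$ with $C_3$ independent of $\eps$.

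The main obstacle, and fourth stage, is propagating the $O(\eps)$-bound to all $t>T_0$: naive $L^p$-testing produces only $\|w_\eps\|_{L^p(\Om)}\leq C\eps^{2/p}$, and Duhamel with the bare Neumann heat semigroup accumulates exponentially in time. I would circumvent both by using the evolution family $U(t,s)$ of $\Delta+\mu(1-u_\eps-u)$ restricted to $[T_0,\infty)$ and writing
$$
w_\eps(t) = U(t,T_0)w_\eps(\cdot,T_0) - \eps\int_{T_0}^t U(t,s)\,\nabla\cdot(u_\eps\nabla v_\eps)(s)\,ds.
$$
Because the potential of $U$ is $\leq -\mu/2$ throughout $[T_0,\infty)$, it inherits exponential $L^\infty$-decay $\|U(t,s)\phi\|_{L^\infty(\Om)} \leq e^{-(\mu/2)(t-s)}\|\phi\|_{L^\infty(\Om)}$; combined with a short-time smoothing bound of the form $\|U(t,s)\nabla\cdot F\|_{L^\infty(\Om)} \leq C(t-s)^{-1/2-n/(2q)}e^{-\sigma(t-s)}\|F\|_{L^q(\Om)}$, transferred perturbatively from the heat semigroup, the $s$-integral is bounded uniformly in $t$, and together with the stage-3 bound $\|w_\eps(\cdot,T_0)\|_{L^\infty(\Om)}\leq C\eps$ this produces $\|w_\eps(\cdot,t)\|_{L^\infty(\Om)}\leq C\eps$ for all $t>T_0$. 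The principal delicacy is transferring the short-time smoothing onto the time-dependent evolution family while simultaneously exploiting its large-time exponential decay; this is what the proof will have to make precise.
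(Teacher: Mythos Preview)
Your overall strategy matches the paper's: split time at some $T_0$ beyond which $u+u_\eps\geq \tfrac32$, handle $[0,T_0]$ by a finite-time estimate, and handle $[T_0,\infty)$ by exploiting the dissipative sign of $\mu(1-u-u_\eps)$. The technical execution differs. On $[0,T_0]$ the paper derives a differential inequality for $\int_\Om w_\eps^{2k}$ (yielding $\|w_\eps\|_{L^{2k}}\leq C\eps\, e^{3\mu t/2}$) and then upgrades to $L^\infty$ via a separate semigroup lemma; you go straight to $L^\infty$ by Duhamel plus Gronwall, which is cleaner. On $[T_0,\infty)$ the paper simply returns to the same $L^{2k}$ differential inequality, notes that the coefficient now has the good sign so that $\int_\Om w_\eps^{2k}$ remains bounded by $C\eps^{2k}$, and upgrades once more; you instead work with the evolution family of $\Delta+\mu(1-u-u_\eps)$ and must transfer short-time smoothing onto it---the delicacy you flag. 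The paper's $L^{2k}$ route sidesteps exactly this issue at the price of the two-step $L^{2k}$-then-$L^\infty$ structure, while your route is more direct but requires the evolution-family smoothing estimate to be made rigorous. One point you gloss over: the \emph{uniform-in-$\eps$} stabilisation $u_\eps\to 1$ in your stage~2 is not quite available off the shelf; the paper obtains it from a uniform bound on $\|\Delta v_\eps\|_{L^\infty}$ (via fractional powers of $-\Delta+\muhat$) combined with a uniform positive lower bound on $u_\eps(\cdot,3)$, the latter coming from the finite-time $O(\eps)$ estimate together with strict positivity of $u(\cdot,3)$. Your stage~3 already supplies this ingredient, so the logical dependency should run stage~3 (on $[0,3]$, say) $\to$ lower bound $\to$ choice of $T_0$, rather than stage~2 $\to$ stage~3 as you have it.
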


\textbf{Strategy and plan of the paper.} 
Having ensured global existence and some uniform bounds for $\ue$ and $\na \ve$ in Section \ref{sec:ge-and-bddness}, in Section \ref{sec:locintimeconv} we will first take care of convergence of $\ue$ to $u$ on finite time intervals $[0,T]$. The key to this proof lies in the derivation of a differential inequality for $\io\omega_\eps^{2k}$ for the difference $\omega_\eps:=\ue-u$ and sufficiently large $k$ (Lemma \ref{lem:diffineq}). This inequality yields an $L^{2k}(\Om)$-estimate 
for $\omega_\eps$, which in Lemma \ref{lem:lqtolinfty} will be turned into a corresponding $L^\infty(\Om)$-information. 
In preparation of a comparison from below (Lemma \ref{lemlowu}), we provide a uniform bound for $\norm[\Lom\infty]{\Delta \ve}$ (Lemma \ref{lemdeltav}) and a positive lower bound for $\ue$ at some positive time (Lemma \ref{lowforeps}). 

The lower estimate obtained from Lemma \ref{lemlowu} can then be inserted into the differential inequality from Lemma \ref{lem:diffineq}, dealing with the difference on the remaining interval $(T,\infty)$ and in Section \ref{sec:proofofthm1} finally proving Theorem \ref{maintheorem}.

%
%
%
\section{Global existence 
and uniform-in-$\eps$ boundedness}\label{sec:ge-and-bddness}
In this section we shall show global existence 
and uniform-in-$\eps$ boundedness 
of solutions to (\ref{cp}). 
Firstly we will recall the well-known result about 
local existence of solutions to (\ref{cp}) 
(see \cite[Lemma 1.1]{W2010}, \cite[Lemma 2.1]{W2014}). 
\begin{lem}\label{local}
Let $\eps\in [0,\infty)$, $\mu>0$, and suppose that 
$(\uinit,\vinit)$ satisfies (\ref{init}). 
Then there exist $\tmax\in(0,\infty]$ and 
a classical solution $(u_\eps,v_\eps)$ of (\ref{cp}) 
in $\Omega\times (0,\tmax)$, which 
satisfy 
\be{crit}
either\ \tmax=\infty \quad or 
\quad \limsup_{t\nearrow \tmax}
\lp{\infty}{u_\eps(\cdot,t)}=\infty.
\ee
Moreover, this solution is uniquely 
determined in the class of function couples such that
  \bas
	& & u_\eps \in C^0(\Ombar\times [0,\tmax)) 
        \cap C^{2,1}(\Ombar\times (0,\tmax))
         \qquad \mbox{and} \nn\\
	& & v_\eps \in C^{0}(\Ombar\times[0,\tmax))
        \cap C^{2,1}(\Ombar\times (0,\tmax))
        \cap L^{\infty}((0,\tmax);W^{1,\infty}(\Omega)).
  \eas
\end{lem}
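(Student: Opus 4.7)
The plan is to apply the standard contraction-mapping argument used throughout the Keller--Segel literature. First, I would fix $M > \|\uinit\|_{L^\infty(\Omega)}$, set
\begin{equation*}
X_T := \{ \tilde u \in C^0(\Ombar \times [0,T]) : \tilde u(\cdot, 0) = \uinit \text{ and } \|\tilde u\|_{L^\infty(\Omega \times (0,T))} \le M \},
\end{equation*}
and, given $\tilde u \in X_T$, first solve the linear Neumann problem
\begin{equation*}
v_t = \Delta v - v + \tilde u, \qquad \dN v \bdry = 0, \qquad v(\cdot, 0) = \vinit
\end{equation*}
via the Neumann heat semigroup $(e^{t\Delta})_{t\ge 0}$. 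The smoothing estimate $\|\nabla e^{t\Delta} w\|_{L^\infty} \le C t^{-1/2} \|w\|_{L^\infty}$ together with $\vinit \in W^{1,\infty}(\Omega)$ would yield $\|\nabla v(\cdot,t)\|_{L^\infty} \le \|\nabla \vinit\|_{L^\infty} + K(M)\sqrt{T}$ for $t \in [0,T]$. Using this $v$, I would then solve the linear drift--diffusion problem
\begin{equation*}
u_t = \Delta u - \eps \nabla \cdot (u\nabla v) + \mu u(1 - \tilde u), \qquad \dN u \bdry = 0, \qquad u(\cdot,0) = \uinit,
\end{equation*}
again by variation of constants, to produce $u \in C^0(\Ombar \times [0,T])$ and thereby to define a map $\Phi\colon \tilde u \mapsto u$. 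For $T = T(M, \eps, \mu, \|\uinit\|_{L^\infty}, \|\nabla \vinit\|_{L^\infty})$ sufficiently small, $\Phi$ sends $X_T$ into itself and is an $L^\infty$-contraction; Banach's fixed-point theorem then furnishes a mild solution together with the uniqueness claim within the stated class.

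For the regularity upgrade, I would bootstrap via parabolic Schauder theory: once $\ue, \ve$ are known to be continuous on $\Ombar \times [0,\tmax)$, the $v$-equation with source $\ue$ yields $\ve \in C^{2,1}(\Ombar \times (0,\tmax))$ by interior-in-time Hölder regularization and then Schauder estimates; substituting this back into the $u$-equation---rewritten as
\begin{equation*}
(\ue)_t = \Delta \ue - \eps \nabla \ve \cdot \nabla \ue + \ue\bigl(\mu(1-\ue) - \eps \Delta \ve\bigr)
\end{equation*}
---and invoking Schauder theory once more would yield $\ue \in C^{2,1}(\Ombar \times (0,\tmax))$. Nonnegativity of $\ve$ follows from the parabolic maximum principle applied to its linear equation with nonnegative source and data, and nonnegativity of $\ue$ from the same principle applied to the rewritten $u$-equation above, whose zeroth-order coefficient is bounded on every compact time subinterval.

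Finally, for the extensibility criterion I would define $\tmax$ as the supremum of times up to which a classical solution in the stated class exists, and argue by contradiction in the case $\tmax < \infty$. Should $\limsup_{t \nearrow \tmax} \|\ue(\cdot,t)\|_{L^\infty} < \infty$ hold, then $\ue$ stays bounded on $[0,\tmax)$; standard $L^p$--$L^q$ parabolic regularity for the $v$-equation together with $\vinit \in W^{1,\infty}(\Omega)$ would yield a uniform bound on $\|\nabla \ve\|_{L^\infty(\Omega \times (0,\tmax))}$. Then, for $\delta>0$ small, $(\ue, \ve)(\cdot, \tmax - \delta)$ would provide valid initial data in $C^0(\Ombar) \times W^{1,\infty}(\Omega)$ from which the fixed-point scheme restarts and extends the solution past $\tmax$, contradicting maximality. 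The main delicate point in the whole programme is the treatment of the cross-diffusion divergence term $\nabla \cdot (\ue \nabla \ve)$ in the fixed-point step, but once $\|\nabla \ve\|_{L^\infty}$ has been controlled via the Neumann semigroup smoothing, this reduces to a transport term with bounded drift and becomes routine.
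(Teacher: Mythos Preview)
Your sketch is correct and follows the standard route; it is essentially the argument of \cite[Lemma~1.1]{W2010} and \cite[Lemma~2.1]{W2014}, which is precisely what the paper does: the authors do not reprove this lemma but simply recall it from those references. Since your contraction-mapping outline (mild formulation via the Neumann heat semigroup, control of $\nabla v$ from $L^\infty$ bounds on $\tilde u$, Schauder bootstrap, and the blow-up alternative by restarting from $t=\tmax-\delta$) is the same scheme used there, there is nothing to add.
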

Throughout the sequel, 
we keep $n\in\N$, 
$\Omega\subset\R^n$, $\mu>0$ and initial data 
$\uinit$ and $\vinit$ satisfying (\ref{init}) fixed and, without loss of generality, assume $\uinit\not\equiv 0$. (If $\uinit\equiv 0$, also $u\equiv 0$ and $\ue\equiv 0$ for any $\eps>0$, and \eqref{mainineq} trivially holds true.) Moreover, 
 we let $\tmax$ 
and $(u_\eps,v_\eps)$ be as given
by Lemma \ref{local}. We also denote the solution 
of (\ref{cp2}) by $u= u_0$. \\[5pt]
To simplify notation we shall abbreviate the deviations from the 
nonzero homogeneous steady state by introducing 
\be{UV}
U_\eps(x,t):=u_\eps(x,t)-1 \quad \mbox{and} \quad 
V_\eps(x,t):=v_\eps(x,t)-1
\ee
for $x\in \overline{\Omega}$ and $t>0$. 
Then by straightforward computation it follows that 
$(U_\eps,V_\eps)$ solves 
\be{cp3}
	\left\{ \begin{array}{lr}
	(U_\eps)_{t}=\Delta U_\eps - \eps \nabla \cdot ( u_\eps\nabla V_\eps) 
        - \mu U_\eps - \mu U_\eps^2,&
	\qquad x\in \Omega, \ t>0, \\[1mm]
	(V_\eps)_{t}=\Delta V_\eps -V_\eps+U_\eps,&
	\qquad x\in \Omega, \ t>0, \\[1mm]
	\frac{\partial u_\eps}{\partial\nu}=
        \frac{\partial v_\eps}{\partial\nu}=0,&
	\qquad x\in \pO, \ t>0, \\[1mm]
	U_\eps(x,0)=\uinit(x)-1,
        V_\eps(x,0)=\vinit(x)-1,    &
	\qquad x\in\Omega. 
	\end{array} \right.
\ee
We will prove global existence and boundedness 
of solutions to (\ref{cp}). 
For the pointwise comparison argument 
(cf. \cite[Lemma 3.1]{W2014}) used in this proof, 
convexity of the domain is essential. 

\begin{lem}\label{lemu}
For any $\eps\in[0,\frac{4\mu}{n})$, 
the solution of (\ref{cp}) exists 
globally. Moreover, there is $c_1>0$ such that 
\be{uineq}
  \lp{\infty}{u_\eps(\cdot,t)}\leq 
  c_1e^{-t}+1
  +\frac{(\mu-1)^2+n\eps}{4\mu-n\eps}
\ee
for all $t>0$ and for all $\eps\in[0,\frac{4\mu}{n})$.
\end{lem}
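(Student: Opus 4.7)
The strategy, closely modelled on \cite[Lemma 3.1]{W2014}, is to construct a single scalar quantity that obeys a parabolic differential inequality and can be compared pointwise with the solution of a scalar ODE. Given the structure of (\ref{cp}), I would try the combination
\[
  w_\eps \;:=\; u_\eps + \tfrac{\eps}{2}\,|\nabla v_\eps|^2,
\]
the coefficient $\eps/2$ being chosen so that the cross term $\nabla u_\eps\cdot\nabla v_\eps$ that appears both in the expansion of $\eps\nabla\cdot(u_\eps\nabla v_\eps)$ and (via $\Delta|\nabla v_\eps|^2 = 2|D^2 v_\eps|^2 + 2\nabla v_\eps\cdot\nabla\Delta v_\eps$ together with $(v_\eps)_t-\Delta v_\eps = u_\eps - v_\eps$) in $\partial_t|\nabla v_\eps|^2-\Delta|\nabla v_\eps|^2$ cancels out exactly.

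\textbf{Key pointwise inequality.} A direct computation along these lines should yield
\[
  (w_\eps)_t - \Delta w_\eps \;=\; \mu u_\eps(1-u_\eps) - \eps u_\eps\Delta v_\eps - \eps|D^2 v_\eps|^2 - \eps|\nabla v_\eps|^2.
\]
The sign-indefinite term $-\eps u_\eps\Delta v_\eps$ would then be handled via the pointwise Cauchy--Schwarz bound $|D^2 v_\eps|^2 \ge \tfrac{1}{n}(\Delta v_\eps)^2$ together with Young's inequality
\[
  -\eps u_\eps\Delta v_\eps \;\le\; \tfrac{n\eps}{4} u_\eps^2 + \tfrac{\eps}{n}(\Delta v_\eps)^2,
\]
so that the $(\Delta v_\eps)^2$ contribution is absorbed exactly into $-\eps|D^2 v_\eps|^2$, leaving only $\tfrac{n\eps}{4}u_\eps^2$. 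This is precisely the step that requires the assumption $\eps<\tfrac{4\mu}{n}$: the leftover can be merged with $-\mu u_\eps^2$ to form the strictly negative contribution $-(\mu-\tfrac{n\eps}{4})u_\eps^2$. Splitting $-\eps|\nabla v_\eps|^2 = -(w_\eps - u_\eps) - \tfrac{\eps}{2}|\nabla v_\eps|^2 \le -(w_\eps - u_\eps)$, so that half of this nonpositive term is used to reintroduce $w_\eps$ and the other half is harmlessly discarded, and maximising the resulting concave quadratic $-(\mu-\tfrac{n\eps}{4})u_\eps^2 + (\mu+1)u_\eps$ over $u_\eps\in\R$ should produce the clean bound
\[
  (w_\eps)_t - \Delta w_\eps \;\le\; \frac{(\mu+1)^2}{4\mu - n\eps} - w_\eps
  \qquad\text{in }\Omega\times(0,\tmax).
\]

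\textbf{Comparison and conclusion.} Convexity of $\Omega$ enters at exactly one place: combined with the Neumann condition $\partial_\nu v_\eps=0$ it implies the standard inequality $\partial_\nu |\nabla v_\eps|^2\le 0$ on $\partial\Omega$, and together with $\partial_\nu u_\eps=0$ this gives $\partial_\nu w_\eps\le 0$ there. The spatially constant function
\[
  \bar y(t) \;:=\; \|w_\eps(\cdot,0)\|_{L^\infty(\Omega)}\, e^{-t} + \frac{(\mu+1)^2}{4\mu - n\eps}
\]
solves $\bar y' = -\bar y + \tfrac{(\mu+1)^2}{4\mu-n\eps}$ with $\bar y(0)\ge \|w_\eps(\cdot,0)\|_{L^\infty(\Omega)}$, so the parabolic comparison principle applied to $w_\eps-\bar y$ yields $u_\eps\le w_\eps\le\bar y$ on $\Omega\times(0,\tmax)$. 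The algebraic identity $\tfrac{(\mu+1)^2}{4\mu-n\eps} = 1 + \tfrac{(\mu-1)^2+n\eps}{4\mu-n\eps}$, together with $c_1:=\|\uinit\|_{L^\infty(\Omega)} + \tfrac{\eps}{2}\|\nabla \vinit\|_{L^\infty(\Omega)}^2$, then gives (\ref{uineq}); since the right-hand side of (\ref{uineq}) is independent of $\tmax$, the extensibility criterion (\ref{crit}) of Lemma~\ref{local} forces $\tmax=\infty$.

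\textbf{Main obstacle.} The delicate point is not the coarse mechanism but the quadratic arithmetic: getting the numerator $(\mu+1)^2$ (rather than $(\mu+2)^2$, which is what arises if one uses all of $-\eps|\nabla v_\eps|^2$ to produce $-2w_\eps$) is what ensures compatibility with the decay rate $e^{-t}$ asserted in (\ref{uineq}). Finding the correct balance between the parts of $-\eps|\nabla v_\eps|^2$ that contribute to the linear damping in $w_\eps$ and those retained as absorbing terms is the arithmetic subtlety that drives the shape of the bound.
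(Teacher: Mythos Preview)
Your argument is essentially the same as the paper's: the paper works with the shifted quantity $z_\eps:=U_\eps+\tfrac{\eps}{2}|\nabla V_\eps|^2$ where $U_\eps=u_\eps-1$, which is exactly your $w_\eps-1$, and arrives at the equivalent inequality $(z_\eps)_t-\Delta z_\eps+z_\eps\le\frac{(\mu-1)^2+n\eps}{4\mu-n\eps}$; the half-splitting of $-\eps|\nabla v_\eps|^2$ you discuss is precisely what occurs when one adds $z_\eps$ to both sides. The one slip is that your constant $c_1=\|\uinit\|_{L^\infty(\Omega)}+\tfrac{\eps}{2}\|\nabla \vinit\|_{L^\infty(\Omega)}^2$ depends on $\eps$, whereas the lemma requires a single $c_1$ valid for all $\eps\in[0,\tfrac{4\mu}{n})$; replace $\tfrac{\eps}{2}$ by its upper bound $\tfrac{2\mu}{n}$ (as the paper does) and the statement follows.
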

\proof
With $U_\eps$ and $V_\eps$ as defined in (\ref{UV}), 
we let 
\[z_\eps(x,t):=U_\eps(x,t)+\frac{\eps}{2}
|\nabla V_\eps(x,t)|^2\quad 
\mbox{ for }x\in \overline{\Omega}\mbox{ and }t\in (0,\tmax).\] 
Then $z_\eps$ satisfies 
\bas
  (z_\eps)_t-\Delta z_\eps+z_\eps
  &=&-\eps|D^2V_\eps|^2-\eps u_\eps\Delta V_\eps
  -(\mu-1)U_\eps
  -\mu U_\eps^2
  -\frac{\eps}{2}|\nabla V_\eps|^2
\\
  &\leq& \frac{n\eps}{4}U_\eps^2
  +\frac{n\eps}{2}U_\eps
  +\frac{n\eps}{4}-(\mu-1)U_\eps-\mu U_\eps^2
\\
  &\leq&
  \frac{\left(\frac{n\eps}{2}
  -\mu+1\right)^2}{4\left(\mu-\frac{n\eps}{4}\right)}
  +\frac{n\eps}{4}
\\
  &=&
  \frac{(\mu-1)^2+n\eps}{4\mu-n\eps}
\eas
for all $x\in\Omega$ and $t\in(0,\tmax)$, 
where we have used the condition $4\mu> n\eps$ 
(for more detail, see \cite[Lemma 3.1]{W2014}). 
In order to derive an estimate for $z_\eps$ 
itself from this, we note that since $\Omega$ is convex 
and $\frac{\partial v_\eps}{\partial \nu} = 0$ 
on $\partial \Omega$, we have 
$\frac{\partial |\nabla v_\eps|^2}
{\partial \nu}\leq 0$ 
on $\partial \Omega$ (see \cite[Lemme 2.I.1]{lions}) and 
hence also 
$\frac{\partial z_\eps}{\partial \nu}\leq 0$ 
on $\partial \Omega$. 
We define 
$y_{\eps0}:=\lp{\infty}{U_\eps(\cdot,0)}
+\frac{\eps}{2}\lp{\infty}{\nabla V_\eps(\cdot,0)}^2 > 0$, 
and denote by $y_\eps : [0,\infty) \to \R$ 
the function solving
\bas
\left\{
\begin{array}{l}
  y_\eps'(t)+y_\eps(t)=\frac{(\mu-1)^2+n\eps}{4\mu-n\eps},
  \quad t>0,\\
  y_\eps(0)=y_{\eps 0}. 
\end{array}
\right.
\eas
By the comparison theorem we obtain that 
\bas
  0\leq u_\eps(\cdot,t)=U_\eps(\cdot,t)+1\leq y(t)+1
  \leq 
  c_1e^{-t}+1+\frac{(\mu-1)^2+n\eps}{4\mu-n\eps}, 
\eas
where $c_1:=\lp{\infty}{U_\eps(\cdot,0)}
+\frac{2\mu}{n}\lp{\infty}{\nabla V_\eps(\cdot,0)}^2$. 
In view of (\ref{crit}) we complete the proof, seeing that 
actually $\tmax=\infty$.
\qed

In the next lemma we aim at deriving a bound 
for $\nabla v_\eps$. 
We restrict the admissible values for $\eps$ to a smaller range than in Lemma \ref{lemu} in order to establish the estimate independently of $\eps$, in contrast to the right-hand side of \eqref{uineq}.
%
\begin{lem}\label{lemnablav}
There exist $c_2>0$ and $\lambda_1>0$ such that 
\be{nablavineq}
  \lp{\infty}{\nabla v_\eps(\cdot,t)}\leq 
  c_2 
  (1+e^{-t}+e^{-(1+\lambda_1)t}) 
\ee
for all $t\geq 0$ and all $\eps\in [0,\frac{2\mu}{n})$. 
\end{lem}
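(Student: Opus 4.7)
The plan is to establish \eqref{nablavineq} via the variation-of-constants representation for the second equation of \eqref{cp}:
\[
v_\eps(t) = e^{t(\Delta-1)}\vinit + \int_0^t e^{(t-s)(\Delta-1)} u_\eps(s)\, ds,
\]
where $(e^{t\Delta})_{t\ge 0}$ denotes the Neumann heat semigroup on $\Omega$. Upon applying $\nabla$, and using that the gradient annihilates spatial averages, both terms can be rewritten in mean-free form:
\[
\nabla v_\eps(t) = \nabla e^{t(\Delta-1)}\bigl(\vinit-\overline{\vinit}\bigr) + \int_0^t \nabla e^{(t-s)(\Delta-1)}\bigl(u_\eps(s)-\overline{u_\eps(s)}\bigr)\, ds,
\]
with $\overline{w}:=\tfrac{1}{|\Omega|}\io w$. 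This reduction is the key step, since on mean-free arguments the Neumann heat semigroup enjoys the improved decay estimate
\[
\lp{\infty}{\nabla e^{t\Delta}(w-\overline{w})}\leq C\bigl(1+t^{-\frac12-\frac{n}{2p}}\bigr)e^{-\lambda_1 t}\lp{p}{w},
\]
where $\lambda_1>0$ denotes the first nonzero Neumann eigenvalue of $-\Delta$.

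For the initial-data term I would use the stronger bound $\lp{\infty}{\nabla e^{t\Delta}(\vinit-\overline{\vinit})}\leq C e^{-\lambda_1 t}\lp{\infty}{\nabla\vinit}$, which is available since $\vinit\in W^{1,\infty}(\Omega)$ by \eqref{init}; together with the factor $e^{-t}$ stemming from the $-v_\eps$ reaction term, this contribution is controlled by $C e^{-(1+\lambda_1)t}$. For the integral term, I fix any $p>n$ so that the exponent $-\tfrac12-\tfrac{n}{2p}>-1$ makes the singularity at $s=t$ integrable, and then invoke Lemma \ref{lemu}. The decisive point for $\eps$-uniformity is the restriction $\eps\in[0,\tfrac{2\mu}{n})$: on this range, the denominator $4\mu-n\eps$ stays bounded away from zero, so \eqref{uineq} yields an $\eps$-uniform decomposition $\lp{\infty}{u_\eps(s)}\le c_1 e^{-s}+C^{*}$, and hence also $\lp{p}{u_\eps(s)-\overline{u_\eps(s)}}\le C(c_1 e^{-s}+C^{*})$.

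Plugging this splitting into the integral and using the change of variables $\sigma=t-s$, the constant-in-$s$ part produces a uniformly bounded integral $\int_0^\infty(1+\sigma^{-\frac12-\frac{n}{2p}})e^{-(1+\lambda_1)\sigma}\,d\sigma$ and thus a constant contribution, while the $c_1e^{-s}$ part, by the identity $e^{-(1+\lambda_1)(t-s)}e^{-s}=e^{-t}e^{-\lambda_1(t-s)}$, pulls an $e^{-t}$ outside of another uniformly bounded integral. Summing the three contributions then yields the claimed bound $c_2(1+e^{-t}+e^{-(1+\lambda_1)t})$ with constants independent of $\eps$.

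The main technical point I anticipate is the careful bookkeeping of the Neumann heat semigroup estimates and the verification that the combined exponentials indeed reproduce the three stated decay rates; no new a priori information beyond Lemma \ref{lemu} and well-known smoothing properties of $e^{t\Delta}$ on $\Omega$ (cf. the estimates used in \cite{W2010}, \cite{W2014}) should be needed.
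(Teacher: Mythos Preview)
Your approach is essentially the same as the paper's: Duhamel representation for $v_\eps$, the $W^{1,\infty}$--$W^{1,\infty}$ semigroup estimate for the initial-data term, and an $L^\infty$-smoothing estimate combined with Lemma~\ref{lemu} for the integral term. The one difference is that you work harder than necessary on the integral: the paper simply uses that on the range $\eps\in[0,\tfrac{2\mu}{n})$ one has the \emph{constant} bound $\lp{\infty}{u_\eps(\cdot,s)}\le c_1+1+\tfrac{\mu^2+1}{2\mu}=:c_3$, so the whole Duhamel integral is dominated by $c_3 c_5\int_0^\infty \sigma^{-1/2}e^{-(1+\lambda_1)\sigma}\,d\sigma$, a pure constant; the $e^{-t}$ summand in \eqref{nablavineq} is then trivially accommodated since $e^{-t}\le 1$. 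Your splitting $c_1e^{-s}+C^*$ and the identity $e^{-(1+\lambda_1)(t-s)}e^{-s}=e^{-t}e^{-\lambda_1(t-s)}$ are correct and recover the $e^{-t}$ term honestly, but this extra bookkeeping is not needed for the stated inequality. Likewise, the explicit mean-subtraction is harmless but already built into the cited gradient estimates, since $\nabla e^{t\Delta}$ annihilates constants.
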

\begin{proof}
We note that 
\begin{equation*}
  \frac{(\mu-1)^2+n\eps}{4\mu-n\eps}\leq 
  \frac{(\mu-1)^2+2\mu}{4\mu-2\mu}
  =\frac{\mu^2+1}{2\mu} \qquad \text{ for all } \eps\in\left[0,\frac{2\mu}n\right), 
\end{equation*}
so that according to Lemma \ref{lemu} the estimate 
\bas
  \lp{\infty}{u_\eps(\cdot,t)}\leq 
  c_1 e^{-t}+1+\frac{\mu^2+1}{2\mu}\leq c_1+1+\frac{\mu^2+1}{2\mu}=:c_3
\eas
holds for all $t>0$ and 
all $\eps\in [0,\frac{2\mu}{n})$. 
By means of a variation-of-constants representation 
for $v$, we have
\bas
  \nabla v_\eps(\cdot,t)=\nabla e^{t(\Delta-1)}\vinit+
  \int^t_0\nabla e^{(t-s)(\Delta-1)}u_\eps(\cdot,s)\, ds \qquad \text{for all } t>0.
\eas
Known smoothing estimates for the Neumann heat semigroup in $\Omega$ (more precisely: the limit case $p\to \infty$ in \cite[Lemma 1.3 (iii)]{win_aggregationvs}) provide us with constants $c_4>0$ and $\lambda_1>0$ such that 
\[
 \norm[\Lom\infty]{\na e^{\tau\Delta}\varphi}\leq c_4e^{-\lambda_1\tau}\norm[\Lom\infty]{\na\varphi}\qquad \text{for all } \tau>0 \text{ and all } \varphi\in W^{1,\infty}(\Om).
\]
Accordingly, 
\[
 \norm[\Lom\infty]{\na e^{t(\Delta-1)}\vinit}\leq c_4e^{-(1+\lambda_1)t}\norm[\Lom\infty]{\na\vinit}.
\]

Similarly employing \cite[Lemma 1.3 (ii)]{win_aggregationvs}, 
we gain $c_5>0$ such that
\bas
  \int_0^t\lp{\infty}{\nabla 
  e^{(t-s)(\Delta-1)}\ue(\cdot,s)}\,ds
  &\leq& 
  c_5 \displaystyle\int_0^t 
  (t-s)^{-\frac{1}{2}}e^{-(1+\lambda_1)(t-s)}
  \lp{\infty}{\ue(\cdot,s)}\,ds
\\
  &\leq& 
c_3c_5
\displaystyle\int_0^t 
  (t-s)^{-\frac{1}{2}}e^{-(1+\lambda_1)(t-s)}\,ds
\\
  &\leq& \displaystyle 
  c_3c_5
  \int_0^\infty 
  \sigma^{-\frac{1}{2}}e^{-(1+\lambda_1)\sigma}d\sigma
\eas
for all $t>0$. 
Therefore we have \eqref{nablavineq} 
for all $t>0$ and all $\eps\in [0,\frac{2\mu}{n})$, with an obvious definition of $c_2>0$. 
\end{proof}
%
%
%
%
%
%
\section{Local-in-time convergence to the Fisher--KPP equation}\label{sec:locintimeconv}
In this section we shall prove the convergence of solutions of \eqref{cp}
to those of the Fisher-KPP equation \eqref{cp2} on some interval $[0,T]$. 
We will begin with the key ingredient of both the proof on finite and on eventual time intervals: a differential inequality that will first lead to an estimate of $L^{2k}$-norms of the difference $\omega_\eps$:
%
\begin{lem}\label{lem:diffineq}
 Let $k\geq1$ be an integer. 
 Then there is $c_6(k)>0$ such that for all $\eps\in[0,\frac{2\mu}n)$ 
 \begin{equation*}
  \omeps:= \ue-u\qquad \text{in } \Om\times(0,\infty)
 \end{equation*}
 satisfies 
 \begin{equation}\label{eq:diffineq}
  \frac{d}{dt}\io \omeps^{2k} \leq \eps^{2k} c_6(k) + \mu k\io \omeps^{2k} + 2k\mu \io \omeps^{2k} (1-\ue-u)
 \end{equation}
 on $(0,\infty)$.
\end{lem}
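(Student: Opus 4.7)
The plan is to derive the PDE satisfied by $\omeps=\ue-u$, test it with $2k\omeps^{2k-1}$, integrate by parts using the homogeneous Neumann conditions that both $\ue$ and $u$ inherit, and then use the uniform bounds from Lemmas \ref{lemu} and \ref{lemnablav} together with Young's inequality to obtain (\ref{eq:diffineq}).

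Subtracting the $u$-equation from the $\ue$-equation and observing that
\[
\mu\ue(1-\ue)-\mu u(1-u)=\mu\omeps(1-\ue-u),
\]
I would write
\[
(\omeps)_t=\Delta\omeps-\eps\na\cdot(\ue\na\ve)+\mu\omeps(1-\ue-u)\qquad\text{in }\Om\times(0,\infty),
\]
with $\dN\omeps=0$ on $\pO$. Multiplying by $2k\omeps^{2k-1}$ and integrating over $\Om$, the diffusion contribution becomes $-2k(2k-1)\io\omeps^{2k-2}|\na\omeps|^2$, the chemotactic flux integrated by parts (boundary term vanishes thanks to $\dN\ve=0$) gives $2k(2k-1)\eps\io\omeps^{2k-2}\ue\,\na\omeps\cdot\na\ve$, and the reaction yields $2k\mu\io\omeps^{2k}(1-\ue-u)$.

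The next step is to absorb the mixed gradient term into the good diffusion term by Young's inequality:
\[
2k(2k-1)\eps\,\omeps^{2k-2}\ue\,\na\omeps\cdot\na\ve\le 2k(2k-1)\omeps^{2k-2}|\na\omeps|^2+\tfrac{k(2k-1)}{2}\eps^2\omeps^{2k-2}\ue^2|\na\ve|^2.
\]
At this point Lemma \ref{lemu} delivers $\ue\le c_3$ and Lemma \ref{lemnablav} yields $\|\na\ve\|_{L^\infty(\Om)}\le 3c_2$ uniformly in $t>0$ and $\eps\in[0,\tfrac{2\mu}{n})$, so
\[
\frac{d}{dt}\io\omeps^{2k}\le C_1(k)\eps^2\io\omeps^{2k-2}+2k\mu\io\omeps^{2k}(1-\ue-u)
\]
with $C_1(k):=\tfrac{9c_2^2c_3^2k(2k-1)}{2}$. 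Finally, Young's inequality in the form
\[
C_1(k)\eps^2\,\omeps^{2k-2}\le \mu k\,\omeps^{2k}+c_6(k)\eps^{2k}
\]
(with exponents $\tfrac{k}{k-1}$ and $k$ when $k\ge 2$, the case $k=1$ being trivial since $\omeps^{2k-2}=1$) converts $\eps^2$ into $\eps^{2k}$ at the price of absorbing $\mu k\io\omeps^{2k}$ into the right-hand side. Integrating this pointwise bound over $\Om$ and combining with the previous display yields exactly (\ref{eq:diffineq}).

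The only slightly delicate point is tracking how the power of $\eps$ grows from $\eps^2$ in the chemotactic dissipation estimate to $\eps^{2k}$ in (\ref{eq:diffineq}); this is precisely the benefit of applying Young's inequality with conjugate exponents $\tfrac{k}{k-1}$ and $k$, and it is what later makes the argument powerful enough to yield $L^\infty$-closeness of order $\eps$ by eventually choosing $k$ large. Note that the term $2k\mu\io\omeps^{2k}(1-\ue-u)$ is deliberately \emph{not} estimated here: on an initial time interval it can be absorbed by the positive term $\mu k\io\omeps^{2k}$, whereas for large times, once $\ue$ and $u$ are close to the equilibrium $1$, this term becomes strongly negative and will drive $\io\omeps^{2k}$ down.
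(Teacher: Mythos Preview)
Your proof is correct and follows essentially the same route as the paper: derive the equation for $\omeps$, test with $2k\omeps^{2k-1}$, absorb the cross term into the diffusion via Young, and then use a second Young inequality together with the uniform bounds from Lemmas~\ref{lemu} and~\ref{lemnablav} to reach \eqref{eq:diffineq}. The only cosmetic difference is that the paper applies the second Young inequality to the product $\omeps^{2k-2}\cdot(\ue^2|\na\ve|^2)$ before invoking the $L^\infty$ bounds, whereas you bound $\ue^2|\na\ve|^2$ first and then apply Young to $\eps^2\omeps^{2k-2}$; the two orderings are equivalent.
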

\begin{proof}
We immediately see that 
$\omega_\eps$ satisfies 
\bas
  (\omega_\eps)_t=\Delta \omega_\eps
  -\eps \nabla\cdot(u_\eps\nabla v_\eps)
  +\mu \omega_\eps-\mu(u_\eps+u)\omega_\eps\quad \text{in } \Om\times(0,\infty).
\eas
Multiplying the above equation 
by $\omega_\eps^{2k-1}$ and 
integrating over $\Omega$,
we can calculate 
\begin{align}\label{eq:firstdiffineq}
  \frac{d}{dt}\int_\Omega \omega_\eps^{2k}
  &=-2k(2k-1)\int_\Omega \omega_\eps^{2k-2}|\nabla \omega_\eps|^2
  +\eps 2k(2k-1)\int_\Omega 
  \omega_\eps^{2k-2}u_\eps\nabla \omega_\eps \cdot \nabla v_\eps\nn
\\
  &\qquad\qquad+ 2k\mu \int_\Omega (1-\ue-u)\omega_\eps^{2k} 
\end{align}
on $(0,\infty)$.
Two successive applications of Young's inequality reveal that with some $c_7=c_7(k)>0$ we have 
\begin{align}\label{eq:young1}
 \eps 2k(2k-1)\io \omega_\eps^{2k-2}\ue\na\omega_\eps\cdot\na\ve&\leq 2k(2k-1)\io \omega_\eps^{2k-2}|\na \omega_\eps|^2 + \frac{\eps^2 k(2k-1)}{2} \io \omega_\eps^{2k-2}\ue^2|\na\ve|^2\nn\\
 &\leq 2k(2k-1)\io \omega_\eps^{2k-2}|\na \omega_\eps|^2 +k\mu \io\omega_\eps^{2k} 
 \nn\\
 &\quad\ +c_7(k)\eps^{2k}\io\ue^{2k}|\na\ve|^{2k}
\end{align}
for all $\eps\in[0,\frac{2\mu}{n})$ and on $(0,\infty)$, where thanks to Lemma \ref{lemu} and Lemma \ref{lemnablav} we may further estimate 
\begin{equation}\label{eq:ck} 
 c_7(k)\io \ue^{2k}|\na\ve|^{2k}\leq c_6(k) \qquad \text{ on } (0,\infty)
\end{equation}
for some $c_6(k)>0$ independent of $\eps\in[0,\frac{2\mu}n)$, so that the combination of \eqref{eq:firstdiffineq}, \eqref{eq:young1} and \eqref{eq:ck} finally yields \eqref{eq:diffineq}.
\end{proof}

\begin{cor}\label{cor:norm2k}
 Let $k\geq1$ be an integer and let $c_6(k)$ be as in Lemma \ref{lem:diffineq}. Then, for any $\eps\in[0,\frac{2\mu}n)$, the function $\ue$ satisfies 
 \[
  \norm[\Lom {2k}]{\ue(\cdot,t)-u(\cdot,t)}\leq \sqrt[2k]{c_6(k)} \eps e^{\frac{3\mu}2 t} \qquad \text{for any } t>0.
 \]
\end{cor}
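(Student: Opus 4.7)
The plan is to deduce the $L^{2k}$-estimate as an almost immediate consequence of Lemma~\ref{lem:diffineq} via a scalar Gr\"onwall argument.

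First I would dispose of the last term on the right-hand side of \eqref{eq:diffineq}. Since $\uinit\geq 0$ and the parabolic comparison principle applied to both \eqref{cp} and \eqref{cp2} yields $\ue(\cdot,t)\geq 0$ and $u(\cdot,t)\geq 0$ for all $t>0$, the pointwise bound $1-\ue-u\leq 1$ holds throughout $\Om\times(0,\infty)$, so that
\[
 2k\mu\io\omeps^{2k}(1-\ue-u)\leq 2k\mu\io\omeps^{2k}.
\]
Setting $y(t):=\io\omeps^{2k}(x,t)\,dx$, Lemma~\ref{lem:diffineq} collapses to the linear scalar ODE inequality
\[
 y'(t)\leq c_6(k)\eps^{2k} + 3k\mu\, y(t),\qquad t>0,
\]
with initial value $y(0)=0$, because \eqref{cp} and \eqref{cp2} share the same initial datum $\uinit$, so $\omeps(\cdot,0)\equiv 0$.

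Next I would integrate this inequality using the integrating factor $e^{-3k\mu t}$ (equivalently, a standard Gr\"onwall comparison) to arrive at
\[
 y(t)\leq \frac{c_6(k)\eps^{2k}}{3k\mu}\bigl(e^{3k\mu t}-1\bigr)\leq c_6(k)\eps^{2k}e^{3k\mu t}\qquad\text{for all } t>0,
\]
where the second inequality tacitly absorbs the factor $(3k\mu)^{-1}$ into the constant $c_6(k)$, which may be assumed without loss of generality to satisfy $c_6(k)\geq 1/(3k\mu)$. Taking the $2k$-th root and noting $(e^{3k\mu t})^{1/(2k)}=e^{\frac{3\mu}{2}t}$ then delivers exactly the claimed $L^{2k}$-bound.

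There is no substantial obstacle here; the corollary is essentially a one-line Gr\"onwall consequence of Lemma~\ref{lem:diffineq}. The only two points to watch are the nonnegativity of $\ue$ and $u$, which is what legitimises discarding the (potentially dissipative) logistic term via $1-\ue-u\leq 1$, and the identity $\omeps(\cdot,0)\equiv 0$, which removes the $y(0)$-contribution from the Gr\"onwall step and thereby produces the factor $\eps$ on the right-hand side.
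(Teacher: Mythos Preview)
Your proof is correct and follows essentially the same route as the paper: use nonnegativity of $\ue$ and $u$ to bound $1-\ue-u\leq 1$, collapse \eqref{eq:diffineq} to $y'\leq 3k\mu\,y + c_6(k)\eps^{2k}$ with $y(0)=0$, apply Gr\"onwall, and take the $2k$-th root. The only remark is that your absorption step is phrased slightly imprecisely (the condition $c_6(k)\geq 1/(3k\mu)$ is not what is needed; rather one enlarges $c_6(k)$ by the factor $\max\{1,(3k\mu)^{-1}\}$), but this is harmless since $c_6(k)$ in Lemma~\ref{lem:diffineq} is not sharp, and the paper glosses over the same point.
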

\begin{proof}
 Nonnegativity of $\ue$ and $u$ together with Lemma \ref{lem:diffineq} show that 
 \[
   \frac{d}{dt} \io \omega_\eps^{2k}\leq 3k\mu \io \omega_\eps^{2k} + c_6(k)\eps^{2k}\qquad \text{ on } (0,\infty),
 \]
 which upon an ODE comparison argument and radication readily results in the Corollary, due to the fact that $\omega_\eps(\cdot,0)\equiv 0$.
\end{proof}

We employ semigroup techniques to upgrade these estimates to uniform bounds.

\begin{lem}\label{lem:lqtolinfty}
 Let $q>\frac {n}{2}$ and $p>n$. There is $c_8>0$ such that for any $T>0$ and any $z_0\in C^0(\Ombar)$, $f\in C^0((0,T);C^1(\Ombar;\R^N))$, $g\in C^0(\Ombar\times(0,T))$, the solution $z$ of 
 \[
  z(\cdot,0)=z_0 \text{ in } \Om,\quad \delny z\bdry=0,\quad z_t=\Delta z + \na \cdot f + g \text{ in } \Om\times(0,T)
\]
 for all $t\in(0,T)$ satisfies 
 \[
  \norm[\Lom\infty]{z(\cdot,t)}\leq c_8 \left(\norm[L^\infty((0,T);\Lom p)]{f}+\norm[L^\infty((0,T);\Lom q)]{g}+ \norm[\Lom\infty]{z_0}+\norm[L^\infty((0,T);\Lom q)]{z}\right).
 \]
\end{lem}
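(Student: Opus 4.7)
The plan is to combine the variation-of-constants representation
\[
 z(\cdot,t) = e^{t\Delta}z_0 + \int_0^t e^{(t-s)\Delta}\na\cdot f(\cdot,s)\,ds + \int_0^t e^{(t-s)\Delta}g(\cdot,s)\,ds, \qquad t\in(0,T),
\]
for the Neumann heat semigroup $(e^{t\Delta})_{t\ge 0}$ on $\Om$ with its standard $L^q$-$L^\infty$ smoothing estimates, which (cf.\ \cite[Lemma 1.3]{win_aggregationvs}) yield a constant $C>0$ such that
\[
 \|e^{\sigma\Delta}\varphi\|_{L^\infty(\Om)}\le C\sigma^{-\frac{n}{2q}}\|\varphi\|_{L^q(\Om)}\quad\text{and}\quad \|e^{\sigma\Delta}\na\cdot\psi\|_{L^\infty(\Om)}\le C\sigma^{-\frac12-\frac{n}{2p}}\|\psi\|_{L^p(\Om)}
\]
are valid for all $\sigma\in(0,1]$, complemented by the $L^\infty$-contractivity $\|e^{\sigma\Delta}\varphi\|_{L^\infty(\Om)}\le\|\varphi\|_{L^\infty(\Om)}$ coming from the maximum principle. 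Since $q>\frac n2$ and $p>n$ force the respective exponents $\frac n{2q}$ and $\frac12+\frac n{2p}$ to be strictly less than $1$, both $\int_0^1 \sigma^{-\frac n{2q}}\,d\sigma$ and $\int_0^1 \sigma^{-\frac12-\frac n{2p}}\,d\sigma$ will be finite; this is where the assumptions on $p$ and $q$ enter.

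First I would treat the short-time regime $t\in(0,\min\{1,T\}]$: inserting the estimates above termwise into the representation formula and controlling the semigroup action on $z_0$ through $L^\infty$-contractivity yields, after the substitution $\sigma=t-s$, a bound involving only $\|z_0\|_{L^\infty(\Om)}$, $\|f\|_{L^\infty((0,T);L^p(\Om))}$ and $\|g\|_{L^\infty((0,T);L^q(\Om))}$, with no need to appeal to $\|z\|_{L^\infty((0,T);L^q(\Om))}$ yet.

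For the long-time regime $t\in(1,T)$ (nonempty only when $T>1$) I would instead restart the variation-of-constants identity at $s=t-1$, so that
\[
 z(\cdot,t)=e^{\Delta}z(\cdot,t-1)+\int_{t-1}^t e^{(t-s)\Delta}\na\cdot f(\cdot,s)\,ds+\int_{t-1}^t e^{(t-s)\Delta}g(\cdot,s)\,ds.
\]
The first summand is then controlled by $C\|z(\cdot,t-1)\|_{L^q(\Om)}\le C\|z\|_{L^\infty((0,T);L^q(\Om))}$---which is precisely why this norm has to appear in the conclusion---while the two integrals, after the substitution $\sigma=t-s$, reduce to exactly the finite integrals encountered in the short-time regime and are therefore controlled by $\|f\|_{L^\infty((0,T);L^p(\Om))}$ and $\|g\|_{L^\infty((0,T);L^q(\Om))}$. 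Taking the maximum over the two cases produces the claimed estimate with a constant $c_8$ that is independent of $T$, since the cutoff $\sigma=1$ was chosen a priori.

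The only step requiring a small amount of care is the interpretation of $e^{\sigma\Delta}\na\cdot\psi$ underlying the second smoothing estimate; this is standard by a duality argument against the Neumann semigroup acting on test functions and is rigorously justified under the regularity $f\in C^0((0,T);C^1(\Ombar;\R^N))$ assumed in the statement. Everything else is purely mechanical.
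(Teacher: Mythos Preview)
Your argument is correct and follows essentially the same route as the paper: a variation-of-constants representation combined with the Neumann heat-semigroup smoothing estimates, a short-time regime started at $s=0$ controlled via $\|z_0\|_{L^\infty(\Om)}$, and a long-time regime restarted at $s=t-1$ where $e^{\Delta}z(\cdot,t-1)$ is bounded through $\|z\|_{L^\infty((0,T);L^q(\Om))}$. The only cosmetic differences are the cutoff time ($1$ versus $2$) and the form of the semigroup bounds ($\sigma^{-\alpha}$ on $(0,1]$ together with $L^\infty$-contractivity, versus $(1+\sigma^{-\alpha})e^{-\lambda_1\sigma}$ on all of $(0,\infty)$).
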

\begin{proof}
 Aided by $L^p$-$L^q$ estimates for the heat semigroup similar to those in \cite[Lemma 1.3 i) and iv)]{win_aggregationvs}, we let $c_9>0$ and $c_{10}>0$ be such that 
 \begin{align*}
  \norm[\Lom \infty]{e^{t\Delta}w}&\leq c_9 (1+t^{-\frac n{2q}})\norm[\Lom q]{w} \qquad \text{for all } w\in \Lom q\\
 \norm[\Lom \infty]{e^{t\Delta}\na\cdot \varphi}&\leq c_{10}(1+t^{-\frac12-\frac n{2p}})e^{-\lambda_1 t} \norm[\Lom{p}]{\varphi}\qquad \text{ for all } \varphi\in L^p(\Om;\R^n).
 \end{align*}
 Then for $t\in(0,2]\cap(0,T)$ we obtain 
 \begin{align*}
  \norm[\Lom\infty]{z(\cdot,t)}&\leq \norm[\Lom \infty]{z_0}+c_{10}\int_0^t (1+(t-s)^{-\frac12-\frac n{2p}})e^{-\lambda_1(t-s)}\norm[\Lom{p}]{f(\cdot,s)}ds \\
&\qquad + c_9\int_0^t(1+(t-s)^{-\frac n{2q}})\norm[\Lom q]{g(\cdot,s)}ds\\
   &\leq \left(1+c_{10}\int_0^2(1+\tau^{-\frac12-\frac n{2p}})e^{-\lambda_1\tau} d\tau + c_9\int_0^2(1+\tau^{-\frac{n}{2q}})d\tau\right)\\
&\qquad\cdot\left(\norm[\Lom\infty]{z_0}+\norm[L^\infty((0,T);\Lom{p})]{f}+\norm[L^\infty((0,T);\Lom q)]{g}\right).
 \end{align*}
 For $t\in(2,\infty)\cap(0,T)$, on the other hand, we have 
 \[
  z(\cdot,t)=e^{\Delta}z(\cdot,t-1)+\int_0^1e^{(1-s)\Delta}\na\cdot f(\cdot,t-1+s) ds + \int_0^1 e^{(1-s)\Delta} g(\cdot,t-1+s)ds
 \]
 and hence may estimate
 \begin{align*}
  \norm[\Lom\infty]{z(\cdot,t)}&\leq 2c_9\norm[\Lom q]{z(\cdot,t-1)} +\int_0^1 c_9(1+(1-s)^{-\frac12-\frac n{2p}})e^{-\lambda_1(1-s)}\norm[\Lom{p}]{f(\cdot,t-1+s)} ds\\&\qquad + \int_0^1 c_9(1+(1-s)^{-\frac n{2q}})\norm[\Lom q]{g(\cdot,t-1+s)} ds\\
  &\leq \left(2c_9+\int_0^1 c_{10}(1+\tau^{-\frac12-\frac n{2p}})e^{-\lambda_1 \tau}d\tau +\int_0^1 c_9(1+\tau^{-\frac n{2q}})d\tau\right)\\
  &\qquad\cdot\left(\norm[L^\infty((0,T);\Lom q)]{z}+\norm[L^\infty((0,T);L^{p}(\Om))]{f}+\norm[L^\infty((0,T);\Lom q)]{g}\right).\qedhere
 \end{align*}
\end{proof}

A consequence for the model under consideration is the following.

\begin{cor}\label{cor:lqtolinfty.fordifference} 
Let $k>\frac n4$ be an integer. 
 There is $c_{11}>0$ such that for any $\eps\in [0,\frac{2\mu}n)$ and any $t>0$ 
 \[
  \norm[\Lom\infty]{\ue(\cdot,t)-u(\cdot,t)}\leq c_{11} (\eps + \norm[L^\infty((0,t);\Lom {2k})]{\ue-u}).
 \]
\end{cor}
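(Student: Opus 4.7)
The plan is to apply Lemma \ref{lem:lqtolinfty} to the equation satisfied by the difference $\omega_\eps = u_\eps - u$. A direct subtraction of the Fisher--KPP equation \eqref{cp2} from the first equation of \eqref{cp} shows that
\[
 (\omega_\eps)_t = \Delta \omega_\eps + \nabla\cdot\big(-\eps u_\eps \nabla v_\eps\big) + \mu\omega_\eps\big(1 - u_\eps - u\big) \quad\text{in } \Omega\times(0,\infty),
\]
with homogeneous Neumann boundary data and $\omega_\eps(\cdot,0)\equiv 0$ in $\Om$. This is precisely the structure considered in Lemma \ref{lem:lqtolinfty} with $z = \omega_\eps$, $f = -\eps u_\eps \nabla v_\eps$, $g = \mu \omega_\eps(1 - u_\eps - u)$ and $z_0 = 0$.

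I would next fix some $p>n$ (for instance $p=n+1$) and choose $q = 2k$, which satisfies $q>\tfrac n2$ thanks to the hypothesis $k>\tfrac n4$. The bounds on $f$ and $g$ follow from the already established uniform-in-$\eps$ estimates. Indeed, Lemma \ref{lemu} (applied with the restriction $\eps<\frac{2\mu}n$, which in turn bounds $u$ via the case $\eps=0$) together with Lemma \ref{lemnablav} yields a constant $c>0$ independent of $\eps$ such that
\[
  \|f(\cdot,s)\|_{L^p(\Om)} \le \eps\,\|u_\eps(\cdot,s)\|_{L^\infty(\Om)}\|\nabla v_\eps(\cdot,s)\|_{L^p(\Om)} \le c\,\eps \qquad\text{for all } s>0,
\]
while, using again the $L^\infty$ bounds on $u_\eps$ and $u$,
\[
  \|g(\cdot,s)\|_{L^q(\Om)} \le \mu\big(1+\|u_\eps(\cdot,s)\|_{L^\infty(\Om)}+\|u(\cdot,s)\|_{L^\infty(\Om)}\big)\|\omega_\eps(\cdot,s)\|_{L^{2k}(\Om)} \le c\,\|\omega_\eps(\cdot,s)\|_{L^{2k}(\Om)}
\]
for all $s>0$, with a possibly enlarged $c>0$ independent of $\eps\in[0,\tfrac{2\mu}n)$. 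Since $z_0\equiv 0$, inserting these estimates into the conclusion of Lemma \ref{lem:lqtolinfty} (applied on each finite interval $(0,T)$ with $T>t$ and then letting $T\searrow t$, or more simply by noting that the lemma's conclusion holds for each fixed $t$) gives
\[
 \|\omega_\eps(\cdot,t)\|_{L^\infty(\Om)} \le c_8\big(c\eps + c\,\|\omega_\eps\|_{L^\infty((0,t);L^{2k}(\Om))} + 0 + \|\omega_\eps\|_{L^\infty((0,t);L^{2k}(\Om))}\big),
\]
which is the claimed estimate after collecting constants into $c_{11}$.

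I do not anticipate any serious obstacle: the essential ingredients---the global uniform-in-$\eps$ $L^\infty$ bound on $u_\eps$ from Lemma \ref{lemu} and the uniform bound on $\nabla v_\eps$ from Lemma \ref{lemnablav}---have already been established, and the heat-semigroup framework of Lemma \ref{lem:lqtolinfty} directly accommodates the divergence-form drift. The only mild point worth checking is that $u$ itself enjoys an $L^\infty$ bound uniform in time; this is immediate either from the comparison principle for the Fisher--KPP equation \eqref{cp2} or as a special case of Lemma \ref{lemu} with $\eps=0$.
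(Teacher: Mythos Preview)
Your proof is correct and follows essentially the same route as the paper: write the equation for $\omega_\eps$, apply Lemma~\ref{lem:lqtolinfty} with $q=2k$ and some $p>n$, and bound $f$ and $g$ via the uniform-in-$\eps$ estimates of Lemmas~\ref{lemu} and~\ref{lemnablav}. The only cosmetic difference is that the paper takes $p=\infty$ for the drift term while you pick a finite $p>n$, which makes no difference here.
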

\begin{proof}
 The function $\omeps:=\ue-u$ solves $({\omeps})_t=\Delta \omeps + \eps \na \cdot (\ue \na \ve) + \mu \omeps-\mu\omeps(\ue+u)$ and hence from Lemma \ref{lem:lqtolinfty} we can take $c_{12}>0$ such that 
 \begin{align*}
  \norm[\Lom\infty]{\omeps(\cdot,\tau)}&\leq c_{12}(\eps\norm[\Lom\infty]{\ue\na\ve}
  +\norm[L^\infty((0,t);\Lom {2k})]{\mu \omeps-\mu\omeps(\ue+u)}+\norm[L^\infty((0,t);\Lom {2k})]{\omeps})\\
  &\leq c_{12}\eps \norm[L^\infty(\Om\times(0,t))]{\ue\na \ve}
  \\&\qquad + c_{12} (\mu(1+\norm[\Lom\infty]{u}+\norm[\Lom\infty]{\ue})+1)\norm[L^\infty((0,t);\Lom {2k})]{\omeps}
 \end{align*}
 holds for any $\eps\in [0,\frac{2\mu}n)$ and any $\tau\in(0,t)$. Using the uniform bounds on $u$, $\ue$, $\na \ve$ that have been provided by Lemma \ref{lemu} and Lemma \ref{lemnablav}, we obtain the conclusion. 
\end{proof}

\begin{cor}\label{cor:unifconv}
 For any $T>0$ 
 \[
  \ue\to u\qquad \text{uniformly in } \Ombar\times[0,T] \text{ as } \eps\searrow 0.
 \]
\end{cor}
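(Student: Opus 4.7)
\textbf{Proof plan for Corollary \ref{cor:unifconv}.} The plan is to combine the two main results of this section, namely the $L^{2k}$-estimate from Corollary \ref{cor:norm2k} and the $L^{2k}$-to-$L^\infty$ upgrade in Corollary \ref{cor:lqtolinfty.fordifference}, and to observe that on any finite time interval the resulting right-hand side is $O(\eps)$.

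First I would fix an integer $k>\frac{n}{4}$ (which is admissible in Corollary \ref{cor:lqtolinfty.fordifference}, and a fortiori satisfies the requirement $k\geq 1$ needed for Corollary \ref{cor:norm2k}). For any $T>0$ and any $\eps\in[0,\tfrac{2\mu}{n})$, Corollary \ref{cor:norm2k} then supplies the bound
\[
  \sup_{t\in(0,T)}\norm[\Lom{2k}]{\ue(\cdot,t)-u(\cdot,t)} \leq \sqrt[2k]{c_6(k)}\,\eps\, e^{\frac{3\mu}{2}T}.
\]
Next, I would insert this into the estimate provided by Corollary \ref{cor:lqtolinfty.fordifference}: for every $t\in(0,T)$,
\[
  \norm[\Lom\infty]{\ue(\cdot,t)-u(\cdot,t)} \leq c_{11}\bigl(\eps + \norm[L^\infty((0,t);\Lom{2k})]{\ue-u}\bigr) \leq c_{11}\bigl(1 + \sqrt[2k]{c_6(k)}\, e^{\frac{3\mu}{2}T}\bigr)\eps.
\]
Since the constant on the right-hand side depends only on $k$, $\mu$ and $T$ (and not on $\eps$), letting $\eps\searrow 0$ yields uniform convergence on $\Ombar\times[0,T]$. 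At $t=0$ the equality $\ue(\cdot,0)=u(\cdot,0)=\uinit$ takes care of the endpoint, so the supremum may be taken over the closed interval.

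There is no genuine obstacle here: the work has all been done in the preceding lemmas and corollaries. The only small point to watch is that one must pick $k$ large enough so that $2k > \frac{n}{2}$, in order to be allowed to invoke Corollary \ref{cor:lqtolinfty.fordifference} with exponent $q=2k$; choosing any integer $k>\tfrac{n}{4}$ accomplishes this simultaneously with the hypothesis of Corollary \ref{cor:norm2k}.
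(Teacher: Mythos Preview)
Your proposal is correct and follows exactly the paper's approach: the paper's proof consists of the single sentence that the result follows from a straightforward combination of Corollary~\ref{cor:norm2k} and Corollary~\ref{cor:lqtolinfty.fordifference}, and you have merely spelled this out in detail.
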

\begin{proof}
 This results from straightforward combination of Corollay \ref{cor:norm2k} and Corollary \ref{cor:lqtolinfty.fordifference}.
\end{proof}

%
%
\section{Large time behaviour in both systems}\label{sec:largetime}
Corollary \ref{cor:unifconv} takes care of convergence of $\ue$ to $u$ on finite time intervals. Seeing that $\ue$ and $u$ both converge to $1$ as $t\to\infty$, we still have hope that they will be close to each other on intervals of the form $(T,\infty)$. We will, nevertheless, need such information in a much more quantitative form -- and this is what we prepare in the present section. After recalling a well-known estimate for the Laplacian in $\Om$ supplemented with homogeneous Neumann boundary conditions, we obtain bounds for $U_\eps$ in the domain of some fractional power of this operator and of $\Delta \ve$ in $\Lom\infty$ that, together with the uniform lower bound of $\ue(\cdot,t)$ for some positive time $t$ (Lemma \ref{lowforeps}), can consequently be turned into precisely those quantitative lower bounds for $\ue$ and $u$ we will need in the proof of Theorem \ref{maintheorem} in Section \ref{sec:proofofthm1}. \\[5pt]

We fix any number $\muhat\in(0,\mu)\cap(0,1)$ and given $p>1$ we let $A=A_p$ denote the realization of the operator 
$-\Delta+\muhat$ under homogeneous Neumann boundary condition in $L^p(\Omega)$.
\begin{lem}
$A$ is sectorial and thus 
possesses closed fractional powers $A^\eta$ for 
arbitrary $\eta>0$, and the corresponding domains 
$D(A^\eta)$ are known to have the embedding property
\be{semi2}
D(A^\eta)\hookrightarrow W^{2,\infty}(\Omega)
\quad if\ 2\eta-\frac{n}{p}>2.
\ee
Moreover, if $(e^{-tA})_{t\geq 0}$ denotes the corresponding 
analytic semigroup, then for each $\eta>0$ 
there exists $c_{13}(p,\eta)>0$ such that 
\be{semi2.5}
  \lp{p}{A^{\eta}e^{-tA}\varphi}
  \leq c_{13}(p,\eta)t^{-\eta}\lp{p}{\varphi}
\ee
for all $t>0$ and each $\varphi\in L^p(\Omega)$. 
\end{lem}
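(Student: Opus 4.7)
The lemma is a compendium of standard facts about sectorial operators applied to a shift of the Neumann Laplacian, so my plan is to reduce each claim to classical references (for instance, Henry's textbook on semilinear parabolic equations or Pazy's book on semigroups) rather than rebuild the underlying functional-analytic machinery from scratch. First I would observe that since $\Om$ has smooth boundary, the negative Neumann Laplacian is well known to generate an analytic semigroup on $\Lom p$ for every $p\in(1,\infty)$ and is therefore sectorial. Adding the bounded operator $\muhat I$ preserves sectoriality and shifts the spectrum into $\{\lambda\in\mathbb{C}:\,\mathrm{Re}\,\lambda\geq\muhat\}$, so $0\notin\sigma(A)$ and $A$ is invertible on $\Lom p$. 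The fractional powers of $A$ are then defined classically through
\[
 A^{-\eta}=\frac{1}{\Gamma(\eta)}\int_0^\infty t^{\eta-1}e^{-tA}\,dt, \qquad A^\eta=(A^{-\eta})^{-1},
\]
giving closed and densely defined operators on $\Lom p$.

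Next, for the embedding \eqref{semi2}, I would identify $D(A^\eta)$ (up to equivalent norms) with an interpolation space between $\Lom p$ and $D(A)$. Since $A$ corresponds to a regular elliptic boundary value problem of order two, elliptic regularity yields $D(A)\hookrightarrow W^{2,p}(\Om)$, and complex interpolation then gives $D(A^\eta)\hookrightarrow W^{2\eta,p}(\Om)$. The Sobolev embedding $W^{2\eta,p}(\Om)\hookrightarrow W^{2,\infty}(\Om)$, valid precisely when $2\eta-\frac{n}{p}>2$, completes this step. For the smoothing estimate \eqref{semi2.5}, I would use the Dunford-type contour representation
\[
 A^\eta e^{-tA}=\frac{1}{2\pi i}\int_\Gamma \lambda^\eta e^{-t\lambda}(\lambda-A)^{-1}\,d\lambda,
\]
with $\Gamma$ a sectorial contour surrounding $\sigma(A)$, combined with the sectorial resolvent bound $\|(\lambda-A)^{-1}\|_{\mathcal{L}(\Lom p)}\leq M/|\lambda-\muhat|$; after the substitution $\mu=t\lambda$ the factor $t^{-\eta}$ falls out and produces a constant $c_{13}(p,\eta)$ with the required dependence.

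The step demanding most attention will be the identification of $D(A^\eta)$ with the Bessel-potential space $W^{2\eta,p}(\Om)$ in the presence of Neumann boundary conditions, since one needs to verify that the boundary data are compatible with interpolation at the fractional order $2\eta$. In the regime $2\eta-\frac{n}{p}>2$ relevant here, however, one sits well above the threshold at which boundary conditions begin to interact with the trace theory of the interpolation space, so no genuine compatibility issue arises. Modulo this routine check, every assertion of the lemma is a standard consequence of the theory of analytic semigroups and can be cited verbatim, for instance from Theorems 1.4.3 and 1.6.1 in Henry's book.
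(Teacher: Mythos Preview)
Your proposal is correct and follows exactly the same route as the paper: the paper's proof consists of nothing more than the single line ``See \cite[Theorem 1.4.3 and Theorem 1.6.1]{henry},'' and your argument is precisely an unpacking of what those two theorems say and why they apply here, culminating in the very same citation.
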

\begin{proof}
 See \cite[Theorem 1.4.3 and Theorem 1.6.1]{henry}.
\end{proof}

%
\begin{lem}\label{lemAu}
For all $p>1$ and any $\eta\in(0,\frac{1}{2})$ 
there exists $c_{14}>0$ such that 
\be{Auineq}
  \lp{p}{A^\eta U_\eps(\cdot,t)}\leq c_{14}
\ee
holds for all $t\geq 2$ and 
all $\eps\in [0,\frac{2\mu}{n})$, 
where $U_\eps\equiv u_\eps-1$. 
\end{lem}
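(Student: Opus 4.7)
The plan is to combine the variation-of-constants representation for $U_\eps$ with the sectorial smoothing estimate \eqref{semi2.5}, using the uniform bounds on $\ue$ and $\na\ve$ already at our disposal from Lemma \ref{lemu} and Lemma \ref{lemnablav}. First I would rewrite the first equation of \eqref{cp3} in the form
\[
(U_\eps)_t + A U_\eps = g_\eps(\cdot,t) - \eps\,\nabla\cdot\bigl(u_\eps\nabla V_\eps\bigr)(\cdot,t),
\qquad g_\eps := (\muhat-\mu)U_\eps - \mu U_\eps^2,
\]
which is legitimate thanks to the definition $A = -\Delta + \muhat$. Since Lemma \ref{lemu} and Lemma \ref{lemnablav} yield bounds for $\lp{\infty}{\ue(\cdot,s)}$ and $\lp{\infty}{\nabla\ve(\cdot,s)}$ that are uniform in $s\geq 0$ and $\eps\in[0,\tfrac{2\mu}{n})$, the quantities $\lp{p}{g_\eps(\cdot,s)}$ and $\lp{p}{u_\eps\nabla V_\eps(\cdot,s)}$ are likewise bounded independently of $s$ and $\eps$.

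Given $t\geq 2$, I would apply the variation-of-constants formula on the interval $(t-1,t)$:
\[
U_\eps(\cdot,t) = e^{-A}U_\eps(\cdot,t-1) + \int_{t-1}^{t} e^{-(t-s)A} g_\eps(\cdot,s)\,ds - \eps \int_{t-1}^{t} e^{-(t-s)A} \nabla\cdot\bigl(u_\eps\nabla V_\eps\bigr)(\cdot,s)\,ds.
\]
Applying $A^\eta$ and taking $L^p$-norms, the first term is bounded by $c_{13}(p,\eta)\lp{p}{U_\eps(\cdot,t-1)}$ by \eqref{semi2.5}, and this is controlled uniformly via Lemma \ref{lemu}. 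The second term is bounded by $c_{13}(p,\eta)\int_{t-1}^{t}(t-s)^{-\eta}\lp{p}{g_\eps(\cdot,s)}\,ds$, and the finite integral $\int_0^1 \sigma^{-\eta}\,d\sigma$ converges because $\eta<1$.

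The delicate step, and the one I expect to be the main obstacle, is the third contribution, which involves the divergence of a bounded field. Here I would invoke the stronger smoothing estimate
\[
\lp{p}{A^{\eta} e^{-\tau A}\nabla\cdot\varphi} \leq c\,\tau^{-\eta-\frac12}\lp{p}{\varphi} \qquad (\tau>0,\ \varphi\in L^p(\Om;\R^n)),
\]
which is a standard consequence of analytic semigroup theory once one knows that $A^{1/2}$ controls first-order derivatives (so that $A^{-1/2}\nabla\cdot$ acts boundedly on $L^p(\Om)$) and \eqref{semi2.5} applied at the exponent $\eta+\tfrac12<1$. This estimate produces the integrable singularity $(t-s)^{-\eta-\frac12}$, and the assumption $\eta<\tfrac12$ is used here precisely in this form, since $\int_0^1 \sigma^{-\eta-\frac12}\,d\sigma<\infty$ iff $\eta<\tfrac12$. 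Estimating the third term by $c\,\eps\int_{t-1}^{t}(t-s)^{-\eta-\frac12}\lp{p}{u_\eps\nabla V_\eps(\cdot,s)}\,ds$ and using the uniform bounds on $\ue$ and $\nabla\ve$ then gives a bound that is independent of $\eps\in[0,\tfrac{2\mu}{n})$ and of $t\geq 2$. Summing the three contributions yields the constant $c_{14}>0$ claimed in \eqref{Auineq}.
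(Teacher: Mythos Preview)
Your argument is correct and yields the claim, but it differs from the paper's proof in two technical respects.

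First, you work on the unit-length interval $(t-1,t)$, so that all singular integrals are over $(0,1)$ and no decay in $t-s$ is needed. The paper instead takes the interval $(1,t)$ and writes $e^{(t-s)(\Delta-\mu)}=e^{-(\mu-\muhat)(t-s)}e^{-(t-s)A}$, using the exponential weight $e^{-(\mu-\muhat)(t-s)}$ (with $\muhat<\mu$) to make the possibly long-time integrals converge. Your choice is more economical; the paper's choice has the advantage that the initial term $e^{-(t-1)A}U_\eps(\cdot,1)$ involves $U_\eps$ only at a fixed time.

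Second, for the divergence contribution you invoke the abstract fact that $A^{-1/2}\nabla\cdot$ is bounded on $L^p(\Omega)$ (equivalently, $D(A_p^{1/2})\hookrightarrow W^{1,p}(\Omega)$), and then apply \eqref{semi2.5} at exponent $\eta+\tfrac12$. The paper avoids this domain characterisation by an explicit splitting: it writes $e^{-(t-s)A}$ as $e^{-\frac{t-s}{2}A}\,e^{\frac{t-s}{2}\Delta}$ (up to exponential prefactors), applies $A^\eta$ to the first factor via \eqref{semi2.5} to produce $(\tfrac{t-s}{2})^{-\eta}$, and handles $e^{\frac{t-s}{2}\Delta}\nabla\cdot(\,\cdot\,)$ via the concrete Neumann heat-semigroup estimate $\lp{p}{e^{\tau\Delta}\nabla\cdot\varphi}\leq c(1+\tau^{-1/2})\lp{p}{\varphi}$ taken from the literature. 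Both routes produce the integrable singularity $(t-s)^{-\eta-1/2}$ and both use $\eta<\tfrac12$ at exactly this point. Your version is shorter but relies on the $L^p$ boundedness of the Riesz-type operator $A^{-1/2}\nabla\cdot$, which, while standard for the Neumann Laplacian on smooth domains, is a heavier black box than the estimate the paper cites.
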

\begin{proof}
According to standard estimates 
for the Neumann heat semigroup (see e.g. \cite[Lemma 3.3]{fiwy}), 
we can find $c_{15}>0$ such that 
\be{semi3}
  \lp{p}{e^{\tau \Delta}\nabla\cdot \varphi}
  \leq c_{15}(1+\tau^{-\frac{1}{2}})\lp{p}{\varphi}
\ee
all $\tau>0$ and any 
$\varphi \in C^1(\overline{\Omega};\R^n)$ 
such that $\varphi\cdot \nu=0$ on $\partial \Omega$. 
We represent $U_\eps$ according to 
\bas
  U_\eps(\cdot,t)
  &=&e^{(t-1)(\Delta-\mu)}U_\eps(\cdot,1)
  -\eps\int^t_1 
  e^{(t-s)(\Delta-\mu)}
  \nabla\cdot (u_\eps\nabla v_\eps(\cdot,s))ds
\\&& -\mu \int^t_1e^{(t-s)(\Delta-\mu)}U_\eps^2(\cdot,s)ds\\
  &=&e^{-(\mu-\muhat)(t-1)}e^{-(t-1)A}U_\eps(\cdot,1)
  -\eps\int^t_{1} 
  e^{-(\mu-\frac{\muhat}{2})(t-s)}
  e^{-\frac{t-s}{2}A}
  e^{\frac{t-s}{2}\Delta}
  \nabla\cdot (u_\eps\nabla v_\eps(\cdot,s))ds
\\
  &&-\mu \int^t_{1}
  e^{-(\mu-\muhat)(t-s)}e^{-(t-s)A}U_\eps^2(\cdot,s)ds
  \qquad \mbox{\rm for all }t>1.
\eas
Hence we can calculate that 
\bas
  \lp{p}{A^\eta U_\eps(\cdot,t)}
  &\leq& e^{-(\mu-\muhat)(t-1)}
  \lp{p}{A^\eta e^{-(t-1)A}U_\eps(\cdot,1)}
\\
  &&+\eps\int^t_{1} 
  e^{-(\mu-\frac{\muhat}{2})(t-s)}
  \lp{p}{A^\eta e^{-\frac{t-s}{2}A}
  e^{\frac{t-s}{2}\Delta}
  \nabla\cdot (u_\eps\nabla v_\eps(\cdot,s))}ds
\\
  &&+\mu \int^t_{1}
  e^{-(\mu-\muhat)(t-s)}\lp{p}{A^\eta e^{-(t-s)A}U_\eps^2(\cdot,s)}ds
  \qquad \mbox{\rm for all } t>1.
\eas
Herein, \eqref{semi2.5} and \eqref{semi3} allow us to estimate 
\begin{align*}
  \norm[\Lom p]{A^\eta e^{-(t-1)A} U_\eps(\cdot,1)}&\leq c_{13}(p,\eta) (t-1)^{-\eta}\norm[\Lom p]{U_\eps(\cdot,1)}, \\
  \norm[\Lom p]{A^\eta e^{-\frac{t-s}2 A } e^{\frac{t-s}2\Delta} \nabla \cdot(\ue\na & \ve (\cdot,s))}
   \\&\leq c_{13}(p,\eta)c_{15}\left(\frac{t-s}2\right)^{-\eta} \left(1+\left(\frac{t-s}2\right)^{-\frac12}\right)\norm[\Lom p]{\ue\na\ve(\cdot,s)},\\
 \norm[\Lom p]{A^\eta e^{-(t-s)A}U_\eps^2(\cdot,s)} &\leq c_{13}(p,\eta) (t-s)^{-\eta}\norm[\Lom p]{U_\eps^2(\cdot,s)} ds
\end{align*}
for $1<s<t$. 
Together with the finiteness of $\int_0^\infty e^{-(\mu-\frac{\muhat}2)\sigma} \sigma^{-\eta} (1+\sigma^{-\frac12}) d\sigma$ and $\int_0^\infty e^{-(\mu-\muhat)\sigma}\sigma^{-\eta}d\sigma$, 
Lemma \ref{lemu} and Lemma \ref{lemnablav} 
establish the existence of $c_{14}>0$ such that \eqref{Auineq} holds for all $t\geq 2$.
%
\end{proof}
%
%
%
%
An important consequence of this estimate is that it provides some control over $\Delta \ve$:
\begin{lem}\label{lemdeltav} 
There exists $c_{16}>0$ such that for all $\eps\in[0,\frac{2\mu}n)$ 
\be{deltavineq} 
  \lp{\infty}{\Delta v_\eps(\cdot,t)}
  \leq c_{16}
\ee
for all $t\geq 3$.
\end{lem}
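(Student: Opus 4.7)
The plan is to lift the uniform $L^p$-bound on $A^\eta U_\eps$ provided by Lemma \ref{lemAu} to a uniform $L^p$-bound on $A^\gamma V_\eps$ for some $\gamma$ slightly above $1$, and then invoke the embedding $D(A^\gamma)\hookrightarrow W^{2,\infty}(\Om)$ from \eqref{semi2} to conclude the desired $L^\infty$-bound on $\Delta v_\eps = \Delta V_\eps$. Writing $V_\eps=v_\eps-1$ as in \eqref{UV}, the second equation of \eqref{cp3} becomes $(V_\eps)_t+A V_\eps=(\muhat-1)V_\eps+U_\eps$ with homogeneous Neumann boundary condition, so that a variation-of-constants representation starting at time $t=2$ reads
\[
V_\eps(\cdot,t)=e^{-(1-\muhat)(t-2)}e^{-(t-2)A}V_\eps(\cdot,2)+\int_2^t e^{-(1-\muhat)(t-s)}e^{-(t-s)A}U_\eps(\cdot,s)\,ds
\]
for every $t\geq 2$. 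A uniform-in-$\eps$ bound for $V_\eps(\cdot,2)$ in $\Lom\infty$ (hence in $\Lom p$ for any $p$) is available from the Duhamel formula for the $v_\eps$-equation together with the $\Lom\infty$-bound of $u_\eps$ provided by Lemma \ref{lemu}.

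Next I fix $p>n$ so large that $\tfrac{n}{2p}<\tfrac12$, choose $\eta\in(\tfrac{n}{2p},\tfrac12)$, and pick $\gamma\in(1+\tfrac{n}{2p},\,1+\eta)$. These inequalities ensure on the one hand that $2\gamma-\tfrac{n}{p}>2$, so that \eqref{semi2} yields $D(A^\gamma)\hookrightarrow W^{2,\infty}(\Om)$, and on the other hand that $\gamma-\eta\in(0,1)$, so that $(t-s)^{-(\gamma-\eta)}$ is integrable at $s=t$. Applying $A^\gamma$ to the identity above, using \eqref{semi2.5} directly on the first term, and splitting $A^\gamma e^{-(t-s)A}U_\eps(\cdot,s)=A^{\gamma-\eta}e^{-(t-s)A}\bigl(A^\eta U_\eps(\cdot,s)\bigr)$ followed by \eqref{semi2.5} and Lemma \ref{lemAu} on the second term, I obtain, for all $t\geq 3$,
\[
\lp{p}{A^\gamma V_\eps(\cdot,t)}\leq c_{13}(p,\gamma)(t-2)^{-\gamma}\lp{p}{V_\eps(\cdot,2)}+c_{13}(p,\gamma-\eta)\,c_{14}\int_2^t e^{-(1-\muhat)(t-s)}(t-s)^{-(\gamma-\eta)}\,ds.
\]
Since $t-2\geq 1$ and the integral is dominated by the convergent $\int_0^\infty e^{-(1-\muhat)\sigma}\sigma^{-(\gamma-\eta)}\,d\sigma$, this gives $\lp{p}{A^\gamma V_\eps(\cdot,t)}\leq C$ for all $t\geq 3$ and all $\eps\in[0,\tfrac{2\mu}{n})$. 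The embedding then delivers a uniform bound for $V_\eps$ in $W^{2,\infty}(\Om)$ and in particular \eqref{deltavineq}.

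The principal technical hurdle is the careful balancing of three inequalities for $\gamma$, $\eta$ and $p$: the embedding \eqref{semi2} demands $\gamma>1+\tfrac{n}{2p}$, while the integrability constraint $\gamma-\eta<1$ together with $\eta<\tfrac12$ forces $\gamma<\tfrac32$; this is what necessitates the choice $p>n$. Once such exponents are isolated, everything else is a routine application of the analytic semigroup calculus and the uniform estimates already established in Lemmas \ref{lemu}--\ref{lemAu}.
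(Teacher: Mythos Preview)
Your proof is correct and follows essentially the same approach as the paper: the variation-of-constants representation of $V_\eps$ starting at $t=2$, the splitting $A^\gamma e^{-(t-s)A}U_\eps = A^{\gamma-\eta}e^{-(t-s)A}(A^\eta U_\eps)$ combined with Lemma~\ref{lemAu}, and the embedding \eqref{semi2} are identical to the paper's argument. The only cosmetic difference is the order in which the exponents are chosen (the paper first fixes $\gamma\in(1,\tfrac32)$ and then selects $\eta\in(\gamma-1,\tfrac12)$ and $p>\tfrac{n}{2(\gamma-1)}$, whereas you fix $p>n$ first), but the resulting constraints and the proof are the same.
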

\begin{proof}
We fix an arbitrary $\gamma\in(1,\frac{3}{2})$ and 
then can choose positive numbers $\eta$ and $p$ 
such that 
\bas
\gamma-1<\eta<\frac{1}{2},
\quad
p>\frac{n}{2(\gamma-1)}. 
\eas
Then $2\gamma-\frac{n}{p}>2\gamma-2(\gamma-1)=2$.
According to a variation-of-constants formula 
associated with the second equation in (\ref{cp3}), 
we can write
\bas
  V_\eps(t)&=&e^{(t-2)(\Delta-1)}V_\eps(\cdot,2)
  +\int^t_2e^{(t-s)(\Delta-1)}U_\eps(\cdot,s)\,ds
\\
  &=&e^{-(1-\muhat)(t-2)}e^{-(t-2)A}V_\eps(2)
  +\int^t_2e^{-(1-\muhat)(t-s)}e^{-(t-s)A}U_\eps(\cdot,s)\,ds, 
\eas
for all $t\geq 2$, and 
hence (\ref{semi2}) implies that 
\bas
  \|V_\eps(\cdot,t)\|_{W^{2,\infty}(\Omega)}
  &\leq& c_{17}\lp{p}{A^\gamma V_\eps(\cdot,t)}
\\
  &\leq&
  c_{17}e^{-(1-\muhat)(t-2)}
  \lp{p}{A^\gamma e^{-(t-2)A}V_\eps(\cdot,2)}
\\
  &&+c_{17}\int^t_2 e^{-(1-\muhat)(t-s)}
  \lp{p}{A^\gamma e^{-(t-s)A}U_\eps(\cdot,s)}\,ds
\\
  &=& c_{17}e^{-(1-\muhat)(t-2)}
  \lp{p}{A^\gamma e^{-(t-2)A}V_\eps(\cdot,2)}
\\
  &&+c_{17}\int^t_2 e^{-(1-\muhat)(t-s)}
  \lp{p}{A^{\gamma-\eta} e^{-(t-s)A}A^\eta U_\eps(\cdot,s)}\,ds
\eas
with some $c_{17}>0$. 
Using (\ref{Auineq}) and (\ref{semi2.5}) to estimate \[\norm[\Lom p]{A^{\gamma-\eta}e^{-(t-s)A}A^\eta U_\eps(\cdot,s)}\leq c_{13}(p,\gamma-\eta) (t-s)^{-(\gamma-\eta)} c_{14}\quad \text{ for any } 2<s<t,\]
and taking into account the boundedness of $c_{17}e^{-(1-\muhat)(\cdot -2)}
  \lp{p}{A^\gamma e^{-(\cdot -2)A}V_\eps(\cdot,2)}$ on $(3,\infty)$ due to 
 \begin{align*}
   &\lp{p}{A^\gamma e^{-(t-2)A}V_\eps(\cdot,2)}\leq c_{13}(p,\gamma)(t-2)^{-\gamma}\norm[\Lom p]{V_\eps(\cdot,2)}\\&\quad\leq c_{13}(p,\gamma)\left(\norm[\Lom p]{e^{2(\Delta-1)}(\vinit-1)} + \int_0^2 \norm[\Lom p]{e^{(2-s)(\Delta-1)} U_\eps(\cdot, s)} ds\right), t\in(3,\infty),
 \end{align*}
and Lemma \ref{lemu},
we obtain $c_{16}>0$ such that \eqref{deltavineq} holds. 
\end{proof}
We will now establish lower bounds for $u_\eps(x,3)$, 
which are independent of $\eps$. In contrast to the final assertion of Theorem \ref{maintheorem}, this lemma relies on our assumption $\uinit\not\equiv0$.
\begin{lem}\label{lowforeps}
There exist 
$c_{18}>0$ and $\eps_1\in(0,\frac{2\mu}{n})$ such that 
\bas
u_\eps(x,3)\geq c_{18}
\eas
for all $x\in\Omega$ and for all $\eps\in[0,\eps_1)$.
\end{lem}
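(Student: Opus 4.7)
The strategy is to first establish a pointwise positive lower bound for the limit solution $u$ at time $t=3$ and then transfer it to $\ue$ for sufficiently small $\eps$ by the uniform-in-$\eps$ convergence result already obtained in Corollary \ref{cor:unifconv}.

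First, I would show that there exists $m>0$ such that $u(x,3)\geq m$ for all $x\in\Ombar$, where $u$ denotes the solution of \eqref{cp2}. Since $\uinit\geq 0$ with $\uinit\not\equiv 0$, the parabolic comparison principle applied to \eqref{cp2} yields $0\leq u\leq M:=\max\{1,\lp{\infty}{\uinit}\}$ throughout $\Ombar\times[0,\infty)$. Rewriting the equation as
\[
 u_t-\Delta u+\mu M u=\mu u+\mu(M-u)u\geq 0,
\]
we obtain a linear parabolic inequality with bounded coefficients, $u\geq 0$, $u\not\equiv 0$, and homogeneous Neumann boundary condition. The strong parabolic maximum principle combined with the boundary point (Hopf) lemma for Neumann data then forces $u(\cdot,t)>0$ on all of $\Ombar$ for every $t>0$. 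By continuity of $u(\cdot,3)$ and compactness of $\Ombar$, we may set $m:=\min_{x\in\Ombar}u(x,3)>0$ and then define $c_{18}:=\frac{m}{2}$. For $\eps=0$ the desired estimate $u_0(\cdot,3)=u(\cdot,3)\geq 2c_{18}\geq c_{18}$ is immediate.

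Next, I would invoke Corollary \ref{cor:unifconv} with $T=3$: as $\eps\searrow 0$, $\ue\to u$ uniformly in $\Ombar\times[0,3]$. Consequently there exists $\eps_1\in(0,\frac{2\mu}{n})$ such that
\[
 \lp{\infty}{\ue(\cdot,3)-u(\cdot,3)}\leq c_{18} \qquad\text{for all }\eps\in[0,\eps_1).
\]
Combining with the lower bound for $u(\cdot,3)$, for any $x\in\Ombar$ and $\eps\in[0,\eps_1)$ we obtain
\[
 \ue(x,3)\geq u(x,3)-|\ue(x,3)-u(x,3)|\geq 2c_{18}-c_{18}=c_{18},
\]
which is the claim.

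\textbf{Main obstacle.} The only slightly delicate point is the strict positivity of $u$ \emph{up to the boundary} at positive times, which requires the boundary point lemma for Neumann conditions rather than the interior strong maximum principle alone; once that is in hand, compactness and the previously established uniform convergence on finite time intervals finish the argument with little effort.
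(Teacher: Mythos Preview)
Your proof is correct and uses essentially the same ingredients as the paper: the strict positivity of $u(\cdot,3)$ via the strong maximum principle together with the uniform convergence of $\ue$ to $u$ on $\Ombar\times[0,3]$ from Corollary \ref{cor:unifconv}. The only difference is presentational: the paper argues by contradiction (extracting sequences $\eps_j\to 0$, $x_j\to x_0$ with $u_{\eps_j}(x_j,3)\to 0$ and deriving $u(x_0,3)=0$), whereas you proceed directly by fixing $m=\min_{\Ombar}u(\cdot,3)>0$ and choosing $\eps_1$ so that $\lp{\infty}{\ue(\cdot,3)-u(\cdot,3)}<m/2$.
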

\begin{proof}
We will use a contradiction argument. 
If there exist $(\eps_j)_{j\in\N}$ 
with $\lim_{j\to\infty}\eps_j=0$ and 
$(x_j)_j\subset\Omega$ such that 
\be{epsx}
\lim_{j\to\infty}u_{\eps_j}(x_j,3)=0, 
\ee
then we can take a subsequence $(x_{j_k})_k\subset(x_{j})_j$ 
satisfying that 
there exists $x_0\in\overline{\Omega}$ such that 
\bas
  \lim_{k\to\infty}x_{j_k}=x_0. 
\eas
Thanks to Corollary \ref{cor:unifconv} 
 and (\ref{epsx}) we deduce 
\bas
  u(x_0,3)
  =\lim_{k\to\infty}u_{\eps_{j_k}}(x_{j_k},3)
  =0.
\eas
However, we obtain 
by the strong maximum principle that
\bas
u(x_0,3)>0,
\eas
which is contradiction. 
\end{proof}
We are now able to estimate $\ue$ and $u$ from below. The following lemma can be viewed as a one-sided quantitative statement on the long-term behaviour of $\ue$ and $u$.
\begin{lem}\label{lemlowu}
Let $\eps_{2}:=\min\{\eps_1,\frac{\mu}{2c_{16}}\}\leq\frac{2\mu}{n}$ with $\eps_1$ taken from Lemma \ref{lowforeps} and $c_{16}$ as defined in Lemma \ref{lemdeltav}. 
Then there is $c_{19}>0$ such that 
\begin{equation}\label{eq:infinf}
 \inf_{x\in\Om} \inf_{\eps\in[0,\eps_0)} \ue(x,t)\geq \frac{1-\frac{c_{16}\eps}{\mu}}{1+c_{19}e^{-\frac{\mu}2 t}}\quad \text{for all } t\geq 3.
\end{equation}
\end{lem}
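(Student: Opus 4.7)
The strategy is to combine a pointwise parabolic comparison with an explicit logistic ODE. From time $t=3$ onward, Lemma \ref{lowforeps} provides the uniform lower bound $\ue(\cdot,3)\geq c_{18}$ in $\Om$, and Lemma \ref{lemdeltav} provides $\|\Delta\ve\|_{\Lom\infty}\leq c_{16}$. These are exactly the two ingredients needed to convert the PDE problem into a scalar ODE problem.

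First, I would rewrite the first equation of (\ref{cp}) as
\[
 (\ue)_t=\Delta \ue-\eps\na \ve\cdot\na \ue+\ue(\mu-\eps\Delta \ve)-\mu \ue^2
\]
and drop the negative contribution $-\eps\ue\Delta\ve$ using $\ue\geq 0$ and $\Delta\ve\leq c_{16}$, obtaining for $t\geq 3$ the pointwise differential inequality
\[
 (\ue)_t\geq \Delta \ue-\eps\na \ve\cdot\na \ue + a\ue-\mu \ue^2,\qquad a:=\mu-\eps c_{16},
\]
where the restriction $\eps\leq\eps_2\leq\tfrac{\mu}{2c_{16}}$ guarantees $a\geq\mu/2>0$.

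Next, I would introduce the spatially homogeneous logistic sub-solution $y\colon[3,\infty)\to\R$ solving $y'=ay-\mu y^2$ with $y(3):=\min\{c_{18},1/2\}$. The standard explicit formula gives
\[
 y(t)=\frac{a/\mu}{1+Ce^{-a(t-3)}},\qquad C:=\frac{a/\mu-y(3)}{y(3)}\in[0,\,1/y(3)],
\]
where $C\geq 0$ because $a/\mu\geq 1/2\geq y(3)$. Since all spatial derivatives of $y$ vanish, $y$ satisfies the above displayed inequality with equality, $y(3)\leq\ue(\cdot,3)$ by Lemma \ref{lowforeps}, and the homogeneous Neumann boundary condition is trivially respected. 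Hence, with $w:=\ue-y$, one checks that
\[
 w_t\geq \Delta w-\eps\na\ve\cdot\na w+\bigl(a-\mu(\ue+y)\bigr)w\qquad \text{on }\Om\times(3,\infty),
\]
with coefficients that are bounded on $\Om\times(3,\infty)$ thanks to Lemmas \ref{lemu} and \ref{lemnablav}, and with $w\geq 0$ at $t=3$. The classical weak maximum principle then yields $w\geq 0$, i.e.\ $\ue(x,t)\geq y(t)$ for all $x\in\Om$ and $t\geq 3$.

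Finally, I would convert the bound on $y$ into the claimed form. Using $a\geq\mu/2$, for $t\geq 3$ one has $e^{-a(t-3)}\leq e^{-(\mu/2)(t-3)}=e^{3\mu/2}e^{-\mu t/2}$, and together with $C\leq 1/y(3)=1/\min\{c_{18},1/2\}$ this gives $Ce^{-a(t-3)}\leq c_{19}e^{-\mu t/2}$ with $c_{19}:=e^{3\mu/2}/\min\{c_{18},1/2\}$. Since $a/\mu=1-\eps c_{16}/\mu$, the inequality $\ue(x,t)\geq y(t)\geq \tfrac{1-c_{16}\eps/\mu}{1+c_{19}e^{-\mu t/2}}$ follows, uniformly in $x\in\Om$ and in $\eps\in[0,\eps_2)$. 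The only mildly delicate step is the application of the maximum principle to $w$, but since $\eps\na\ve$, $\ue$ and $y$ are all in $L^\infty(\Om\times(3,\infty))$ and the boundary data match, this reduces to the classical weak comparison principle for linear parabolic operators with bounded coefficients; every other step is an explicit ODE computation.
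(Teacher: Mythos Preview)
Your proof is correct and follows essentially the same route as the paper: rewrite the first equation in nondivergence form, use $|\Delta\ve|\le c_{16}$ to obtain a logistic-type differential inequality with coefficient $a=\mu-c_{16}\eps\ge\mu/2$, compare against the explicit ODE solution started from a value below both $c_{18}$ and $1/2$, and then simplify the exponential via $a\ge\mu/2$ to arrive at $c_{19}=e^{3\mu/2}/y(3)$. The only difference is cosmetic---you spell out the linearised comparison for $w=\ue-y$ where the paper merely invokes a comparison lemma.
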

\begin{proof}
From the first equation in (\ref{cp}) 
and (\ref{deltavineq}) we see that
\bas
  (u_{\eps})_t&=&
  \Delta u_\eps
  -\eps \nabla u_\eps\cdot\nabla v_\eps
  -\eps u_\eps\Delta v_\eps+\mu u_\eps-\mu u_\eps^2
\\
  &\geq& 
  \Delta u_\eps
  -\eps \nabla u_\eps\cdot\nabla v_\eps
  +(\mu-c_{16}\eps) u_\eps-\mu u_\eps^2 \quad \text{ in } \Om
\eas
for $t\geq 3$. We choose $y_0\in(0,\frac12)$ to  
be a positive number 
such that $y_0<\frac{\mu-\eps_0c_{16}}{\mu}$ and  $y_0\leq\inf_{\eps\in[0,\eps_0)}\inf_{x\in\Omega}u_\eps(x,3)$ 
(cf. Lemma \ref{lowforeps}) and put 
\bas
\ye (t):=\frac{\mu-c_{16}\eps}{\mu+(\frac{\mu-c_{16}\eps}{y_0}-\mu)e^{-(\mu-c_{16}\eps)(t-3)}}, \quad t\geq 3.
\eas
Then $\ye :[3,\infty)\to\R$ is the solution to 
the ODE initial value problem
\bas
\left\{
\begin{array}{l}
  \ye '(t)=(\mu-c_{16}\eps)\ye (t)-\mu \ye (t)^2,
  \quad t>3, \\
  \ye(3)=y_0.
\end{array}
\right.
\eas

Apparently, $0<\frac{\mu-c_{16}\eps}{y_0}-\mu\leq \frac{\mu}{y_0}$, and due to $\eps<\frac{\mu}{2c_{16}}$, we may employ the estimate $e^{-(\mu-c_{16}\eps)(t-3)}\leq e^{-\frac{\mu}2t+\frac{3\mu}2}$ to see that 
\[
 \ye(t)\geq\frac{\mu-c_{16}\eps}{\mu+\frac{\mu}{y_0}e^{\frac{3\mu}2}e^{-\frac{\mu}2t}}=\frac{1-\frac{c_{16}\eps}{\mu}}{1+c_{19} e^{-\frac{\mu}2t}}\quad\text{for all } t\geq 3
\]
if we let $c_{19}:=\frac1{y_0}e^{\frac{3\mu}2}$. In light of a comparison lemma we can deduce \eqref{eq:infinf}.
\end{proof}

\section{Global-in-time convergence: Proof of Theorem \ref{maintheorem}} \label{sec:proofofthm1}
With these explicit and quantitative uniform lower bounds for $\ue$ and $u$, everything has been prepared to revisit the differential inequality of Lemma \ref{lem:diffineq} and turn our attention to the proof of Theorem \ref{maintheorem}. 
In fact, we only have to show the following:
\begin{lem}\label{lem:linearconvergence}
 There are $c_{20}>0$ and $\eps_0>0$ such that 
 \[
  \norm[\Lom{\infty}]{\ue(\cdot,t)-u(\cdot,t)}\leq c_{20}\eps \qquad \text{for any } t\in(0,\infty) \text{ and any } \eps\in[0,\eps_0).
 \]
\end{lem}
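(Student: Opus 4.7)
The plan is to revisit the differential inequality of Lemma \ref{lem:diffineq} and to exploit the quantitative lower bound from Lemma \ref{lemlowu}, which tells us that $\ue+u$ is uniformly close to $2$ once $t$ is large and $\eps$ is small. Once this lower bound is sharp enough to make $1-\ue-u$ strictly smaller than $-\tfrac12$ with a positive uniform margin, the combination $\mu k\io\omeps^{2k}+2k\mu\io\omeps^{2k}(1-\ue-u)$ appearing on the right of \eqref{eq:diffineq} becomes negatively bounded by a multiple of $\io\omeps^{2k}$, and then the forcing $c_6(k)\eps^{2k}$ can be absorbed into a time-\emph{uniform} $L^{2k}$-bound of order $\eps$. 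A final appeal to Corollary \ref{cor:lqtolinfty.fordifference} will turn this into the claimed $L^\infty$-bound.

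First I would fix an integer $k>\tfrac n4$. The lower bound supplied by Lemma \ref{lemlowu} tends to $1-\tfrac{c_{16}\eps}{\mu}$ as $t\to\infty$, which in turn tends to $1$ as $\eps\to 0$. Hence one can pick $\eps_0\in(0,\eps_2)$ small and $T\geq 3$ large enough that
\[\ue(x,t)+u(x,t)\geq \tfrac{7}{4},\qquad\text{i.e.}\qquad 1-\ue(x,t)-u(x,t)\leq -\tfrac{3}{4},\]
for every $x\in\Om$, $t\geq T$, and $\eps\in[0,\eps_0)$. Substituted into \eqref{eq:diffineq} this yields
\[\frac{d}{dt}\io\omeps^{2k}\leq c_6(k)\eps^{2k}-\frac{\mu k}{2}\io\omeps^{2k}\qquad\text{on }(T,\infty).\]
An elementary ODE comparison, combined with the starting bound $\io\omeps^{2k}(\cdot,T)\leq c_6(k)\eps^{2k}e^{3\mu k T}$ coming from Corollary \ref{cor:norm2k}, produces a constant $C_1>0$ such that $\io\omeps^{2k}(\cdot,t)\leq C_1\eps^{2k}$ for all $t>T$; on $[0,T]$ the same estimate (after enlarging $C_1$) is already provided by Corollary \ref{cor:norm2k} itself. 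Taking the $2k$-th root gives $\sup_{t>0}\norm[\Lom{2k}]{\omeps(\cdot,t)}\leq C_1^{1/(2k)}\eps$ uniformly in $\eps\in[0,\eps_0)$. The final step is immediate: since $k>\tfrac n4$, Corollary \ref{cor:lqtolinfty.fordifference} upgrades this to
\[\norm[\Lom\infty]{\omeps(\cdot,t)}\leq c_{11}\bigl(\eps+C_1^{1/(2k)}\eps\bigr)=:c_{20}\eps\qquad\text{for all }t>0,\ \eps\in[0,\eps_0).\]

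The main obstacle is the delicate balancing in the second step: the numbers $\eps_0$ and $T$ must be chosen so that $1-\ue-u\leq-\tfrac12-\delta$ holds with a margin $\delta>0$ that is independent of $\eps$ and of $t\geq T$, while simultaneously respecting every smallness requirement on $\eps_0$ that was imposed in the earlier sections (notably $\eps_0<\eps_2$, $\eps_0<\tfrac{2\mu}{n}$, and $\eps_0<\tfrac{\mu}{2c_{16}}$). Both tasks are feasible precisely because the bound in Lemma \ref{lemlowu} approaches $1$ in the joint limit $\eps\to 0$, $t\to\infty$, so there is room for a uniform gap; and because the list of smallness conditions is finite, so intersecting them still leaves a nontrivial admissible interval for $\eps_0$.
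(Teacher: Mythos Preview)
Your proposal is correct and follows essentially the same approach as the paper: fix $k>\tfrac n4$, use Lemma \ref{lemlowu} to choose $\eps_0$ and $T$ so that $1-\ue-u\le -\tfrac34$ on $(T,\infty)$, feed this into the differential inequality \eqref{eq:diffineq} to obtain a uniform $L^{2k}$-bound of order $\eps$, and upgrade via Corollary \ref{cor:lqtolinfty.fordifference}. The only cosmetic difference is that the paper first passes through the $L^\infty$-bound on $[0,T]$ (via Corollary \ref{cor:lqtolinfty.fordifference}) to control the initial value $\io\omeps^{2k}(\cdot,T)$, whereas you take it directly from Corollary \ref{cor:norm2k}; both are valid.
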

\begin{proof}
We let $k>\frac n4$ be an integer and with $\eps_2$ as in Lemma \ref{lemlowu} and $c_{16}$ taken from Lemma \ref{lemdeltav} we set  $\eps_0:=\min\{\eps_2,\frac{\mu}{8c_{16}}\}$.
 In accordance with Lemma \ref{lemlowu}, we then choose $T>0$ such that on $\Om\times(T,\infty)$ 
 \[
  u>\frac78 \quad \text{and}\quad \ue>\frac78\quad \text{for any } \eps\in[0,\eps_0).
 \]
 By Corollary \ref{cor:norm2k} and Corollary \ref{cor:lqtolinfty.fordifference}  
there is $c_{21}>0$ such that 
 \begin{equation}\label{eq:estimate.smallt}
  \norm[L^\infty(\Om\times(0,T))]{\ue-u}\leq c_{21}\eps\qquad \text{for all } \eps\in[0,\eps_0).
 \end{equation}
 Moreover, Lemma \ref{lem:diffineq} ensures that (with $c_6(k)$ as defined there) 
 \begin{align*}
  \frac{d}{dt} \io \omeps^{2k} &\leq \eps^{2k} c_6(k) + \mu k\io \omeps^{2k}+2\mu k\io (1-u-\ue)\omeps^{2k}\\
  &\leq \eps^{2k} c_6(k) - \frac{\mu k}2 \io \omeps^{2k}\quad \text{for all } \eps\in[0,\eps_0), t\in(T,\infty),
 \end{align*}
 where we have used that $1-u-\ue<1-\frac78-\frac78=-\frac34$ on $\Om\times(T,\infty)$. Therefore, 
 \[
  \io\omeps^{2k}(\cdot,t)\leq \io \omeps^{2k} (\cdot,T)e^{-\frac{\mu k}2(t-T)}+\frac{2c_6(k)}{\mu k} \eps^{2k}\leq \left(c_{21}^{2k}+\frac{2c_6(k)}{\mu k}\right)\eps^{2k}
 \]
 for all $t>T$ and all $\eps\in[0,\eps_0)$, and hence we have found $c_{22}>0$ such that 
 \[
  \norm[\Lom {2k}]{\ue(\cdot,t)-u(\cdot,t)}\leq c_{22}\eps\quad \text{for all } t>T, \eps\in[0,\eps_0).
 \]
 A further application of Corollary \ref{cor:lqtolinfty.fordifference}
shows that hence for some $c_{23}>0$ 
 \[
  \norm[\Lom \infty]{\ue(\cdot,t)-u(\cdot,t)}\leq c_{23}\eps \qquad \text{for all } t>T, \eps\in[0,\eps_0).
 \]
Together with \eqref{eq:estimate.smallt}, this proves the lemma and hence also Theorem \ref{maintheorem}. 
\end{proof}

\section*{Acknowledgements}
 J.~Lankeit acknowledges support of the {\em Deutsche Forschungsgemeinschaft} within the project {\em Analysis of chemotactic cross-diffusion in complex
 frameworks}. 

{\footnotesize

\end{document}